\def\N{\mathbb{N}}
\DeclareMathOperator{\ch}{ch}
\newtheorem{theorem}{Theorem}[section]
\newtheorem{proposition}[theorem]{Proposition}
\newtheorem{question}[theorem]{Question}
\newtheorem{claim}[theorem]{Claim}
\newtheorem{lemma}[theorem]{Lemma}
\newtheorem{corollary}[theorem]{Corollary}
\theoremstyle{definition}
\newtheorem{definition}[theorem]{Definition}
\newtheorem{example}[theorem]{Example}
\algnewcommand\algorithmicallof{\textbf{all of}}
\algnewcommand\algorithmicanyof{\textbf{any of}}
\author{Peter Bradshaw}
\address{Department of Mathematics, University of Illinois Urbana-Champaign, Urbana, IL, USA}
\email{pb38@illinois.edu}
\author{Tianyue Cao}
\address{Department of Mathematics, University of Illinois Urbana-Champaign, Urbana, IL, USA}
\email{tc37@illinois.edu}
\author{Atlas Chen}
\address{Department of Mathematics, University of Illinois Urbana-Champaign, Urbana, IL, USA}
\email{liguoc2@illinois.edu}
\author{Braden Dean}
\address{Department of Mathematics, University of Illinois Urbana-Champaign, Urbana, IL, USA}
\email{bdean9@illinois.edu}
\author{Siyu Gan}
\address{Department of Mathematics, University of Illinois Urbana-Champaign, Urbana, IL, USA}
\email{siyugan3@illinois.edu}
\author{Ram\'on I.~Garc\'ia}
\address{Department of Mathematics, University of Illinois Urbana-Champaign, Urbana, IL, USA}
\email{rig2@illinois.edu}
\author{Amit Krishnaiyer}
\address{Department of Mathematics, University of Illinois Urbana-Champaign, Urbana, IL, USA}
\email{amitlk2@illinois.edu}
\author{Grace McCourt}
\address{Department of Mathematics, Iowa State University, Ames, IA, USA}
\email{gmccourt@iastate.edu}
\author{Arvind Murty}
\address{Department of Mathematics, University of Illinois Urbana-Champaign, Urbana, IL, USA}
\email{amurty2@illinois.edu}
\thanks{This paper originates from an Illinois Math Lab undergraduate project at University of Illinois Urbana-Champaign. Peter Bradshaw received support from NSF RTG grant DMS-1937241.}
\def\epsilon{\varepsilon}
\begin{document}
\title{Chip games and multipartite graph paintability}
\maketitle
\begin{abstract}
    We study the paintability, an on-line version of choosability, of complete multipartite graphs. We do this by considering an equivalent chip game introduced by Duraj, Gutowski, and Kozik \cite{duraj2015chip}. We consider complete multipartite graphs with $ n $ parts of size at most 3. Using a computational approach, 
    we establish upper bounds on the
    paintability of such graphs for small values of $ n. $ 
    
    The choosability of complete multipartite graphs is closely related to value $ p(n, m) $, the minimum number of edges in a $n$-uniform hypergraph with no panchromatic $m$-coloring.
    We consider an online variant of this parameter $ p_{OL}(n, m), $ introduced by Khuzieva et al.~\cite{khuzieva2017} using a \emph{symmetric chip game}. With this symmetric chip game, we find an improved upper bound for $ p_{OL}(n, m)$ when $m \geq 3$ and $n$ is large. Our method also implies a lower bound on the paintability of complete multipartite graphs with $m \geq 3$ parts of equal size.
\end{abstract}

\section{Background}
\subsection{Introduction to paintability}
A \emph{proper coloring} of a graph $G$ 
is a coloring $f:V(G) \rightarrow \mathbb N$
of the vertex set of
$G$ such that no two adjacent vertices have the same color. If there exists a proper coloring of $ G $ with $ r $ colors, then we say that $ G $ is \emph{$r$-colorable}.
The \emph{chromatic number} of $ G $ (denoted by $ \chi(G) $) is the minimum number $ r $ such that $ G $ is $ r $-colorable.

Now, consider a function $L : V(G) \rightarrow 2^\N $ (called a \emph{list-assignment}) that gives each vertex $v$ its own set $L(v) \subseteq \mathbb N$ of possible colors. An \emph{$ L $-coloring of $ G $} is a proper coloring 
$f:V(G) \rightarrow \mathbb N$ such that $f(v) \in L(v)$ for each $v \in V(G)$.
We say that a graph $ G $ is
\emph{$L$-colorable} if $ G $ can be properly colored with the list-assignment $ L: V(G) \rightarrow \N. $ If $ G $ is $ L $-colorable for every list-assignment $ L: V(G) \rightarrow 2^\N $ satisfying $ |L(v)| \geq r $ for every vertex $ v \in V(G) $,
then we say that $ G $ is \emph{$ r $-choosable}. 
The  \emph{choosability} of $G$, denoted by $\ch(G)$, is the minimum number $ r $ such that $ G $ is $ r $-choosable.

Although the idea of choosability was established in the 1970s independently by Vizing \cite{Vizing} and Erd\H{o}s, Rubin, and Taylor \cite{erdos1979choosability}, it was only in 2009 that Zhu \cite{zhu2009line} and Schauz \cite{Schauz} independently 
introduced the on-line setting of list coloring.
In this setting, the \emph{on-line list coloring game} is played on undirected graphs, where two players, typically referred to as Lister and Painter, compete under specific rules involving the coloring of graph vertices with a set of available colors. The game consists of the following steps:

\begin{itemize}
    \item \emph{Preparation:} An undirected simple graph $G$ and threshold integer $ r \geq 1$ is given.
    \item \emph{Lister Turn:} On Turn $i$,
    Lister chooses a vertex set $S \subseteq V(G)$
    consisting of currently unpainted vertices  and presents the vertices in $S$ to Painter. 
    \item \emph{Painter Turn:} On Turn $i$, Painter chooses an independent set $I \subseteq S$ and paints all of the vertices in $I$ with the color $i$.
    \item \emph{Turn Alternation:}  The game proceeds in rounds.
    A \emph{round} consists of Lister's turn followed by Painter's turn.
    \item \emph{Win Conditions:} At the end of a round, Painter wins if all of the vertices have been painted, and Lister wins if there exists an unpainted vertex that has been presented by the Lister at least $ r $ times.
\end{itemize}
If the Painter has a winning strategy on $G$ with a threshold of $ r, $ then $ G $ is \emph{$r$-paintable}. The \emph{paintability} of $ G $ (denoted by $ \chi_P(G) $) is the minimum integer $ r $ such that $ G $ is $ r $-paintable.

One may observe that for any given graph $G$, we have \[\chi(G) \leq \ch(G) \leq \chi_P(G).\]
To see the first inequality, note that if $ G $ is $ r $-choosable, then  $G$ can be properly colored with the list-assignment $ L $ assigning $ \{1, \ldots, r\} $ to each vertex, meaning it can simply be properly colored with $r$ colors.

For the second inequality, suppose that $G$ is not $k$-choosable, so that $\ch(G) > k$. Then, there exists a $k$-assignment $L:V(G) \rightarrow 2^{\mathbb N}$ for which $G$ has no $L$-coloring. We label the colors of $\bigcup_{v \in V(G)} L(v)$ as $1, \dots, \ell$. Then, on each turn $i$, Lister reveals the color $i$ at each $v \in V(G)$ for which $i \in L(v)$. If Painter wins the game, then at the end of the game, $G$ has an $L$-coloring, a contradiction. Therefore, Painter has no winning strategy, implying $\chi_P(G) > k$.

In 2015, Duraj, Gutowski and Kozik  \cite{duraj2015chip}
modified a chip game of Aslam and Dhagat \cite{AslamDhagat} 
to be
equivalent to 
the on-line list coloring game played on \emph{complete multipartite graphs}. Before defining this chip game, we introduce the following concept: 
A \emph{complete k-partite graph}, denoted as $K_{n_1, \ldots, n_k}, $ is a graph whose vertices can be partitioned into $k$ different independent sets $ A_i$, called \emph{parts}, with $ |A_i| = n_i$, such that for any pair of vertices $(u,v)$ that belongs to two different parts $A_i$, an edge connects $u$ and $v$. 
When $k$ is not specified, such graphs $G$ are called complete multipartite graphs.
If the $k$ parts of $ G $ all have cardinality $ m, $ then we write write $ G = K_{m \star k}. $

Intuitively, in order to properly color a complete $k$-partite graph with as few colors as possible, each part needs to be colored with a unique color that is not used
in the other parts. Thus, for every
complete $k$-partite graph with $k$ nonempty parts, $\chi(G) = k$.

\subsection{Paintability: Known results}
In the study of choosability and paintability, complete multipartite graphs have received special attention due to their connection with Ohba's conjecture \cite{Ohba}. Ohba's original conjecture from 2002 asserts that if $G$ is a graph satisfying $|V(G)| \leq 2 \chi(G) + 1$, then $\chi(G) = \ch(G)$. Later, in 2012, Huang, Wong, and Zhu \cite{huang2012application} posed the on-line Ohba's conjecture, which asserts that if $|V(G)| \leq 2 \chi(G)$, then $\chi(G) = \chi_P(G)$.
The stronger assumption that $|V(G)| \leq 2 \chi(G)$ is necessary, as Kim, Kwon, Liu, and Zhu \cite{KKLZ} proved
that 
\begin{equation}
\label{eqn:one-3}
\chi_P(K_{2 \star (n - 1), 3 \star 1}) = n + 1  > n = 
\chi(K_{2 \star (n - 1), 3 \star 1}),
\end{equation}
and $K_{2 \star (n - 1), 3 \star 1}$ is a graph on $2n+1$ vertices.
As a proof of either version of Ohba's conjecture for complete multipartite graphs implies the conjecture for all graphs, complete multipartite graphs have become a special focus for research on choosability and paintability.


For certain classes of complete multipartite graphs, exact values for choosability and paintability are known.
In 1993, Alon  \cite{alon1993restricted} proved that for each integer $n \geq 1$,
$ch(K_{2 \star n}) = n$.
Later Kim, Kwon, Liu and Zhu \cite{KKLZ} showed
that 
$\ch(K_{2\star n}) = \chi_P(K_{2 \star n}) = n$.
The choosability for $K_{3 \star n}$ was later given by Kierstead  \cite{kierstead2000choosability}, who showed that
$ch(K_{3 \star n})=\left \lceil \frac{4n - 1}{3} \right \rceil$.
Regarding the paintability of $K_{3 \star n}$, Kozik, Micek and Zhu \cite{KMZ} found an upper bound for these graphs, implying the following:
\begin{equation}
\label{eqn:K3n}
\left \lceil \frac{4n - 1}{3} \right \rceil \leq ch(K_{3 \star n}) \leq  \chi_P(K_{3 \star n}) \leq \frac{3n}{2}.   
\end{equation}

As the choosability 
of a graph
is always at most its paintability, 
it is natural then to investigate the gap between the two parameters. When Zhu \cite{zhu2009line} introduced 
online list coloring,
he showed that $ K_{6,9} $ and $ K_{6,10} $ are not 3-paintable, but they are known to be 3-choosable. He then asked if the gap between 
the choosability and paintability of a single graph could be made arbitrarily large.
Given the gap between the lower and upper bound of $\chi_{P}(K_{3 \star n})$, Kozik, Micek, and Zhu \cite{KMZ} considered the graphs $K_{3 \star n}$ as natural candidates 
for study, and they
asked the following question:
\begin{question}
    What is the relationship between $\ch(K_{3 \star n})$ and $\chi_P(\chi_{3 \star n})$ as $n \rightarrow \infty$? In particular, is $\chi_P(K_{3 \star n}) - \ch(K_{3\star n})$ unbounded as $n \rightarrow \infty$?
\end{question}

For the question of whether the gap between a graph's choosability and paintability can be arbitrarily large, the graph $K_{n,n}$ ended up being the key to the answer of this question. 
Duraj, Gutowski and Kozik  \cite{duraj2015chip}
pointed out that a result of Radhakrishnan and Srinivasan \cite{radhakrishnan} for proper hypergraph $2$-coloring implies the following upper bound:
$$ch(K_{n,n}) = \log_2 n - \Omega(\log_2 \log_2 n) \text{ as } n \rightarrow \infty.$$
On the other hand, letting  $ N $ be the smallest number such that $ K_{N,N} $ is not $ k $-paintable, Aslam and Dhagat \cite{AslamDhagat} proved that $ 2^{k-1} \leq N $ and also provided a constructive strategy which demonstrated that $ N \leq k \phi^{2k} $, where $\phi = \frac{\sqrt 5 + 1}{2}$ is the golden ratio.
Later, Duraj, Gutowski and Kozik 
\cite{duraj2015chip} produced a strategy that improved the upper bound to $ N \leq 2^{k+3}. $ As such,
$N = \Theta(2^k),$
which implies that 
$$\chi_P(K_{n,n}) = \log_2 n + O(1) \text{ as } n \rightarrow \infty.$$
This proves that $\chi_P(K_{n,n}) - \ch(K_{n,n}) \rightarrow \infty$ as $n \rightarrow \infty$, showing that
the gap between the choosability and the paintability of a graph can be arbitrarily large.


\subsection{Chip games}

A $ (k, n_1, \ldots, n_m) $ chip game is equivalent to the On-line List Coloring Game  played on a complete multipartite graph $K_{n_1, \dots, n_m}$.
In this game, two players, typically referred to as Pusher and Remover, compete under specific rules. The game is defined as follows.

\begin{definition}
    A $ (k, n_1, \ldots, n_m) $ chip game consists of the following steps.

\begin{itemize}
    \item \emph{Preparation:} A table with $m$ columns and rows labeled $0, \dots, k$ is provided as a game board.
    For $i \in\{1, \dots, m\}$, $n_i$ identical chips are placed in row $0$ of the $i$th column.
    \item \emph{Pusher Turn:} Pusher chooses a nonempty set $S$ of chips on the board, and then pushes  each chip in the set $S$ exactly $1$ row upwards, so that a chip in $S$ occupying row $i$ moves to row $i+1$.
    \item \emph{Remover Turn:} Remover chooses exactly one column $C$ and removes all chips belonging to $S$ in the column $C$.
    \item \emph{Turn Alternation:} The \{Pusher, Remover\} turn pair is referred to as a Round. The turn then alternates between Pusher and Remover for an arbitrary number of Rounds.
    \item \emph{Win Condition:} Pusher wins if at least $1$ chip occupies row $k$ at the end of a round, and Lister wins if all chips are removed from the board before Pusher's win condition is reached.
\end{itemize}
\end{definition}
We note that on each turn of a $(k,n_1, \dots, n_m)$ chip game, Pusher is required to move a nonempty set of chips. Furthermore, the game ends if either a single chip moves $k$ times or if all chips are removed. Therefore, the $(k,n_1, \dots, n_m)$ chip game ends after fewer than $k(n_1 + \dots + n_m)$ turns.
We abbreviate the $(k, \underbrace{n, \dots, n}_{m \textrm{ times}})$ chip game as the $(k, n \star m)$ chip game. 

 Duraj, Gutowski, and Kozik \cite{duraj2015chip} observed the following theorem for $m=2$, but their ideas naturally generalize to all positive $m$. We include a proof for completeness.
\begin{theorem}
\label{thm:chippaint}
    $\chi_P(K_{n_1, \dots, n_m}) > k$ if and only if Pusher has a winning strategy in the $(k, n_1, \dots, n_m)$ chip game.
\end{theorem}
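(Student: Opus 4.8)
The plan is to set up an explicit, move-for-move correspondence between a play of the on-line list coloring game on $G:=K_{n_1,\dots,n_m}$ with threshold $k$ and a play of the $(k,n_1,\dots,n_m)$ chip game, and to run this correspondence in both directions. Fix a bijection $\beta$ from the chips to $V(G)$ that carries the $n_i$ chips of column $i$ to the $n_i$ vertices of the part $A_i$. The invariant I would maintain throughout any parallel run of the two games is: a chip $c$ is currently on the board if and only if $\beta(c)$ is a currently unpainted vertex, and in that case the row occupied by $c$ equals the number of times $\beta(c)$ has so far been presented by Lister. This dictionary converts the two win conditions into each other: a chip sitting in row $k$ at the end of a round corresponds to an unpainted vertex that has been presented $k$ times (a Lister, equivalently Pusher, win), and an empty board corresponds to every vertex being painted (a Painter, equivalently Remover, win). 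The feature of $G$ that makes the translation work is that every independent set of $G$ lies inside a single part, so ``Painter paints the independent set $I$'' matches ``Remover chooses the unique column whose part contains $I$''.

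For ``Pusher wins $\Rightarrow \chi_P(G)>k$'': given a winning Pusher strategy $\pi$, I would have Lister carry a simulated chip game driven by $\pi$ — whenever $\pi$ pushes a chip-set $T$, Lister presents $\beta(T)$ (unpainted by the invariant, and nonempty, hence legal); when Painter paints an independent set $I\subseteq A_C$, Lister lets the simulated Remover pick column $C$ and updates the board. One checks the invariant is preserved; the point worth care is that it survives even when Painter paints a \emph{proper} subset of $\beta(T)\cap A_C$ — the chips of the remaining vertices of $\beta(T)\cap A_C$ are simply discarded from the simulation while their vertices stay unpainted, which cannot hurt Lister and leaves intact the two parts of the invariant I actually use (``row $=$ number of presentations'' and ``on the board $\Rightarrow$ unpainted''). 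Since the chip game lasts fewer than $k(n_1+\dots+n_m)$ rounds and $\pi$ is winning, the simulation forces a chip into row $k$ at the end of some round, whereupon the corresponding vertex is unpainted and has been presented $k$ times, so Lister wins; hence Painter has no winning strategy with threshold $k$, i.e.\ $\chi_P(G)>k$.

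For ``Remover wins $\Rightarrow \chi_P(G)\leq k$'': given a winning Remover strategy $\rho$, I would have Painter carry a simulated chip game driven by $\rho$ — when Lister presents a nonempty set $S$, Painter imagines Pusher pushing $\beta^{-1}(S)$, lets $\rho$ answer with a column $C$, and paints exactly $S\cap A_C$ (a legal move). Now Painter matches Remover's removals exactly, so the invariant is a genuine bijection between on-board chips and unpainted vertices throughout; therefore ``the board is emptied'' is equivalent to ``every vertex gets painted'', and ``$\rho$ never leaves a chip in row $k$ at a round's end'' is equivalent to ``every unpainted vertex always has fewer than $k$ presentations''. As $\rho$ is winning, the board empties within fewer than $k(n_1+\dots+n_m)$ rounds with no chip ever left in row $k$, so Painter paints everything while Lister's win condition never triggers; thus $G$ is $k$-paintable, i.e.\ $\chi_P(G)\leq k$. (A round in which Lister presents the empty set changes no presentation count and may simply be skipped.)

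Finally, the chip game always terminates in fewer than $k(n_1+\dots+n_m)$ rounds and exactly one player has won at termination (an empty board forbids a chip in row $k$), so, being a finite two-player game of perfect information with no draws, exactly one of Pusher and Remover has a winning strategy; combining this with the two implications above yields the stated equivalence. I expect the main obstacle to be precisely the asymmetry between Painter, who may legally paint \emph{any} independent subset of the presented set, and Remover, who must remove \emph{all} of the just-pushed chips in the chosen column. The resolution is the one flagged in the second paragraph: use only the one-sided form of the invariant (``on the board $\Rightarrow$ unpainted'') in the direction where Painter is the adversary — there it tolerates Painter underpainting — and invoke the full bijective form only in the direction where Painter is under our control and can be instructed to paint the whole part-intersection.
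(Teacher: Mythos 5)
Your proof is correct and rests on exactly the same dictionary as the paper's (chips correspond to vertices, a chip's row to the number of times its vertex has been presented, removal of a column to Painter painting inside the corresponding part), but you decompose the equivalence differently. The paper proves $\chi_P(G)>k \Rightarrow$ Pusher wins by converting a Lister winning strategy into a Pusher strategy, with the simulated Painter instructed to paint \emph{all} of $S\cap A_i$, so the correspondence stays an exact bijection and the underpainting asymmetry never arises; the converse is then dismissed as ``the same correspondence.'' You instead prove the two translations the paper leaves implicit: Pusher wins $\Rightarrow \chi_P(G)>k$ (Pusher-to-Lister) and Remover wins $\Rightarrow \chi_P(G)\leq k$ (Remover-to-Painter), and close the equivalence by Zermelo determinacy of the finite chip game. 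What your route buys is an explicit resolution of the genuine asymmetry -- Painter may paint any independent subset of the presented set, while Remover must remove every pushed chip in its column -- via the one-sided invariant ``on the board implies unpainted, with row equal to presentation count,'' which is precisely the point a careful reader must supply in the paper's unproved direction; the cost is the extra determinacy appeal, though the paper implicitly uses determinacy too (of the coloring game) when it passes from $\chi_P(G)>k$ to ``Lister has a winning strategy $\Sigma$.'' One small omission: if Painter paints the empty set, no column $C$ is determined by $I$; your simulated Remover should then remove an arbitrary column, and your one-sided invariant is still preserved, so the argument goes through unchanged.
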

\begin{proof}
    Write $G = K_{n_1, \dots, n_m}$. Write $A_1, \dots, A_m$ for the $m$ parts of $G$. 
    For each part $i \in \{1, \dots, m\}$, we write $c^i_1, \dots, c^i_{n_i}$ for the chips in column $i$. Similarly, we write $v^i_1, \dots, v^i_{n_i}$ for the $n_i$ vertices in part $A_i$ of $G$.

    First, suppose that $\chi_P(G) > k$. Then, Lister has a winning strategy $\Sigma$ in the online list coloring game on $G$ with $k$ colors. Pusher proceeds as follows. 
    Pusher first initializes an instance of the online list coloring game on $G$.
    On each Round $i$ of the chip game, Pusher first uses $\Sigma$ to calculate a winning move of Lister on Round $i$ of the online list coloring game on $G$. This winning move of Lister is in the form of a set $S \subseteq V(G)$. Then, for each $v^i_j \in S$, Pusher pushes the chip $c^i_j$. Next, if Remover removes Column $i$, then in the online list coloring game on $G$, Pusher imagines that Painter colors every vertex in $S \cap A_i$. It is straightforward to check that after each round of the chip game, a chip $c^i_j$ remains on the board if and only if $v^i_j$ is uncolored, and $c^i_j$ occupies row $r$ if and only if Lister has presented a color at $v^i_j$ exactly $r$ times. As the strategy $\Sigma$ allows Lister to present a color $k$ times at some vertex $v^i_j$ so that the vertex $v^i_j$ is not colored, it thereby follows that Pusher succeeds in pushing a chip $c^i_j$ $k$ times so that $c^i_j$ is not removed. Therefore, Pusher has a winning strategy in the $(k,n_1, \dots, n_m)$ chip game.

    For the other direction, the result follows from the same correspondence described above.
\end{proof}

\subsection{Our results}

In Section \ref{sec:comp}, we compute the paintability of some complete multipartite graphs, including $K_{3 \star n}$ for $n \in \{4,5,6\}$,  as well as some unbalanced graphs such as $K_{2\star 4,3 \star 3}$, with the help of a computer.
For $K_{3 \star n}$ and $n \in \{4,5,6\}$, we show that the paintabilities are $\lceil \frac{4n-1}{3}\rceil$, which agrees with their choosabilities. 
In Section \ref{sec:hyper}, we introduce the \emph{online hypergraph pancoloring} problem, 
which was observed by Akhmejanova, Bogdanov, and Chelnokov to be representable using a chip game.
We also adapt a chip game strategy of Duraj, Gutowski, and Kozik \cite{duraj2015chip}
to obtain an upper bound for the minimum number of edges in a hypergraph that cannot be panchromatically $r$-colored online, improving a previous result of Khuzieva et al.~\cite{khuzieva2017}.
Finally, in Section 
\ref{sec:conclusion}, 
we pose some questions.

\section{Computational Approach}
\label{sec:comp}

In this section, we aim to
use a computational approach to
accurately determine the paintabilities for several complete multipartite graphs for which there is a gap between the known lower and upper bounds of their paintabilities. As indicated in Equation (\ref{eqn:K3n}), the exact paintabilities of many complete multipartite graphs $K_{3 \star n}$ remain undetermined, as do the paintabilities of many graphs composed of partite sets consisting of $2$ and $3$ vertices. 
We aim to determine the paintabilities of certain complete multipartite graphs of this form by computationally evaluating chip games with $2$ or $3$ chips per column.
%

To investigate the paintabilities of complete multipartite graphs, we use the chip game of Duraj, Gutowski, and Kozik \cite{duraj2015chip}, along with Theorem \ref{thm:chippaint}. In order to determine whether $\chi_P(K_{n_1,\dots,n_m}) > k$ for some positive integer $k$, we aim to determine whether Pusher has a winning strategy in the $(k,n_1, \dots, n_m)$ chip game. Therefore, in this section, we outline a computational procedure
that determines whether a given position in the chip game is winning or losing for Pusher, so that we can apply this algorithm to the initial position of the $(k,n_1, \dots, n_m)$ chip game.

\subsection{Formal framework}
We describe a formal framework in which our computational procedure operates. We begin with some definitions.
\begin{definition}
A \emph{column state $C$} with $K$ chips is a sequence of pairs
    \[C = ((r_1, m_1), (r_2, m_2), \cdots, (r_K, m_K)),\]
    where $ r_1 \geq r_2 \geq \cdots \geq r_K \geq -1$ and $m_i \in \{0,1\}$ for each $i \in \{1, \dots, K\}$.
    Each pair $(r_i, m_i)$ represents a chip $A_i$.
    If $r_i \geq 0$, then
    $r_i$ represents the row of $A_i$; if $r_i = -1$, then $A_i$ has been removed from the board.
    We also let $m_i = 1$
    if it is Remover's turn and $A_i$ has just been pushed (and thus is eligible for removal); otherwise, $m_i = 0$.
\end{definition}

\begin{example}
    The column state $C = ((5, 0), (3, 1), (0, 0), (-1, 0), (-1, 0))$ represents a column with $5$ chips $A_1, \dots, A_5$. Chips $A_1, A_2, A_3$ in are in rows $5, 3, 0$ respectively, and two chips $A_4, A_5$ removed from the board. Each removed chip is represented by an ordered pair with first entry $-1$.
    It is Remover's turn, and chip $A_3$ has just been pushed and is eligible for removal.
    The column state is shown in Figure \ref{fig:column}.
\end{example}

\begin{figure}
    \begin{center}
        \begin{tabular}{|c|c|}
            \hline
            5 & $A_1$ \\ \hline
            4 &  \\ \hline
            3 & $A_2$ \\ \hline
            2 &  \\ \hline
            1 &  \\ \hline
            0 & $A_3$ \\ \hline
        \end{tabular}
    \end{center}
    \caption{The column in the figure has chips in rows $5$, $3$, and $0$. There are two additional chips that were originally in the column but were removed. The current state of this column is represented by the sequence of pairs $((5,0), (3,1), (0,0), (-1,0), (-1,0))$.}
    \label{fig:column}
\end{figure}

\begin{definition}
     A \emph{board} $B$ is a set of $N$ column states with $K$ chips:
    \[
        B = \{ C_1, C_2, \cdots, C_N \}.
    \]
In a board $B$, each element $C_i$ corresponds to a column in a chip game. Therefore, a board with $N$ elements $C_i$ 
uniquely corresponds to an arrangement of chips in a chip game with $N$ columns.
\end{definition} 
\begin{definition}
    A \emph{game state} is a triple $G = (B, \Gamma, P)$ where $B$ is the board, $\Gamma \in \mathbb N$ is the \emph{winning threshold}, and $P \in \{ \text{Pusher}, \text{Remover} \}$ is the player to move.
\end{definition}
    The value $\Gamma$ represents the row that a chip needs to reach in order for Pusher to win the chip game. We elaborate on the precise meaning of $\Gamma$ in Definition \ref{def:winning-losing-states}.
We note that game state refers to a snapshot of the chip game, with information including the positions of all chips, the player to make the move, and the number of rows.
We note further that a $(\Gamma, K \star N)$ chip game has an initial game state $(B, \Gamma, \text{Pusher})$ where $B = \{ C_1, C_2, \cdots, C_N \}$ and $C_i = (\underbrace{(0, 0), (0, 0), \cdots, (0, 0)}_{K \textrm{ times}})$ for each $i \in \{1, \dots, N\}$.

\begin{definition}
    $\mathcal{G}(N, K, \Gamma)$ denotes the set of all game states with $N$ columns and $K$ chips per column. $\mathcal{G}_p(N, K, \Gamma) \subseteq \mathcal{G}(N, K, \Gamma)$ denotes the set of game states where it is Pusher's move, and $\mathcal{G}_r(N, K, \Gamma) \subseteq \mathcal{G}(N, K, \Gamma)$ denotes the set of game states where it is Remover's move.
\end{definition}

\begin{definition}
    \label{def:winning-losing-states}
    $\mathcal{W}(N, K, \Gamma) \subseteq \mathcal{G}_p(N, K, \Gamma)$ denotes the set of game states where Pusher's winning condition is met, that is, there is a chip at or above row $\Gamma$. The set $\mathcal{L}(N, K, \Gamma) \subseteq \mathcal{G}_p(N, K, \Gamma)$ denotes the set of game states where Pusher's losing condition is met, that is,
    there are no chips left on the board.
\end{definition}

Note that in the definitions above, we look
at the game from Pusher's perspective, so that the term \emph{winning} refers to Pusher winning, and the term \emph{losing} refers to Pusher losing.
Note also that both $\mathcal{W}(N, K, \Gamma)$ and $\mathcal{L}(N, K, \Gamma)$ are subsets of $\mathcal{G}_p(N, K, \Gamma)$, which means that only states with Pusher to move are considered as winning or losing.

\begin{definition}
\label{def:move}
    Given $N$, $K$, and $\Gamma$, a \emph{Pusher move} $\sigma_p \in \{ 0, 1 \}^{NK}$ 
    on a board state $G \in \mathcal{G}(N, K, \Gamma)$ is a binary string of length $NK$ whose entries are indexed by the $NK$ chips,
    such that the $i$th bit is $1$ if and only if Pusher pushes the $i$th chip.
    A \emph{Remover move} $1 \leq \sigma_r \leq N$ is an integer representing the column chosen by Remover.
\end{definition}
    For a Pusher/Remover move $\sigma$, we use $\sigma(G)$ to denote the resulting game state after applying the move.
    We note that according to Definition \ref{def:move}, Pusher is allowed to push chips that have already been removed from the board. This allowance does not affect the game, as a chip never returns to the board after being removed.

We use the following framework, described by Knuth and Moore \cite{KnuthMoore}, to assign a numerical value $F(G)$ to each game state $G \in \mathcal G(N,K,\Gamma)$.
First, for each $G \in \mathcal W(N,K,\Gamma)$, we let $F(G) = 1$, and for each $G \in \mathcal L(N,K,\Gamma)$, we let $F(G) = 0$. Then, we extend $F$ to all of $\mathcal G(N,K,\Gamma)$ by defining $F(G)$ as follows for each $G \in \mathcal G(N,K,\Gamma) \setminus (\mathcal W(N,K,\Gamma) \cup \mathcal L(N,K,\Gamma) )$:
\[F(G) = 
\begin{cases}
    \max_{\sigma} F(\sigma(G)) & \textrm{ if it is Pusher's move in $G$} \\
    \min_{\sigma} F(\sigma(G)) & \textrm{ if it is Remover's move in $G$},
\end{cases}
\]
where $\sigma$ runs over all legal moves of the player with the move in the position $G$. Knuth and Moore \cite{KnuthMoore} show that for finite games, a function $F$ of this kind is well defined.
We say that a state $G \in \mathcal G(N,K,\Gamma)$ is \emph{winning} if $F(G) = 1$, and we say that $G$ is \emph{losing} if $F(G) = 0$.

\begin{definition}
    Define the \emph{partial order $\geq$} on column states in $\mathcal G_p(N,K,\Gamma)$
    as follow: If 
    \[C = ((r_1, m_1), (r_2, m_2), \cdots, (r_K, m_K))\] and 
    \[C' = ((r_1', m_1'), (r_2', m_2'), \cdots, (r_K', m_K')),\]
    then $C \geq C'$ if and only if for all $i$ from $1$ to $K$, $r_i \geq r_i'$.

\end{definition}

\begin{definition}
    Given $N$, $K$, and $\Gamma$, define the \emph{partial order $\geq$} on boards $B = \{ C_1, C_2, \cdots, C_N \}$ and $B' = \{ C_1', C_2', \cdots, C_N' \}$ as follows: $B \geq B'$ if and only if there exists a permutation $\pi$ of $\{ 1, 2, \cdots, N \}$ such that for all $1 \leq i \leq N$, $C_i \geq C_{\pi(i)}$.

\end{definition}

\begin{definition}
    Given $N$, $K$, and $\Gamma$, define the \emph{partial order $\geq$} on game states $G = (B, \Gamma, P) \in \mathcal{G}_p(N, K, \Gamma)$ and $G' = (B', \Gamma, P') \in \mathcal{G}_p(N, K, \Gamma)$: $G \geq G'$ if and only if $B \geq B'$.
\end{definition}


The following proposition follows easily from an inductive argument using the recursive definition of $F$.

\begin{proposition}
        \label{prop:comp}
    Given $N$, $K$, and $\Gamma$, let $G, G' \in \mathcal{G}_p(N, K, \Gamma)$. If $G \geq G'$, then $F(G) \geq F(G')$. In other words, if $G \geq G'$, then:
    \begin{itemize}
        \item If $G'$ is winning state, then $G$ is winning state.
        \item If $G$ is losing state, then $G'$ is losing state.
    \end{itemize}
\end{proposition}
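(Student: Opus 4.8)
The plan is to prove Proposition~\ref{prop:comp} by induction on the game tree, using the recursive definition of $F$ together with a monotonicity property of the move sets. Since every $(\Gamma, K \star N)$ chip game terminates in fewer than $\Gamma(n_1 + \dots + n_m)$ turns, the function $F$ is defined by a well-founded recursion, so we may induct on the maximum remaining number of rounds (equivalently, on the depth of the game tree rooted at $G$, which dominates the depth of the tree rooted at any $G' \le G$). The base cases are the states in $\mathcal W(N,K,\Gamma)$ and $\mathcal L(N,K,\Gamma)$: if $G' \in \mathcal W$, i.e.\ some chip in $G'$ is at or above row $\Gamma$, then since $G \ge G'$ every chip of $G$ dominates the corresponding chip of $G'$ in row number, so $G$ also has a chip at or above row $\Gamma$ and hence $G \in \mathcal W$, giving $F(G) = 1 \ge F(G')$; dually, if $G \in \mathcal L$ (no chips remain on $G$'s board) then the domination $G \ge G'$ forces every chip of $G'$ to have been removed as well, so $G' \in \mathcal L$ and $F(G') = 0 \le F(G)$. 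Note that the partial order is only defined on $\mathcal G_p$, so we only ever compare Pusher-to-move states, which is exactly what the statement asks.

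For the inductive step, suppose $G \ge G'$ are both non-terminal with Pusher to move. The key lemma is a \emph{strategy-stealing / move-lifting} claim: for any legal Remover response in the position reached from $G'$, there is a corresponding Remover response from the position reached from $G$ that preserves the domination, and symmetrically Pusher can mimic from $G'$ any push available from $G$ while preserving domination after one full round. Concretely, given a Pusher move $\sigma_p'$ witnessing $F(G') = \max_{\sigma} F(\sigma(G'))$, I would have Pusher play the ``same'' move $\sigma_p$ from $G$ (pushing the chip in column $\pi(i)$ of $G$ exactly when $\sigma_p'$ pushes the chip in column $i$ of $G'$, under the witnessing permutation $\pi$); one checks $\sigma_p(G) \ge \sigma_p'(G')$ as intermediate Remover-to-move positions, using that pushing preserves the coordinatewise row inequality and that the ``just pushed'' markers $m_i$ line up. Then for \emph{every} Remover column choice $\sigma_r$ in $\sigma_p(G)$, the corresponding column choice in $\sigma_p'(G')$ yields $\sigma_r(\sigma_p(G)) \ge \sigma_r'(\sigma_p'(G'))$: removing a superset-or-equal set of just-pushed chips from dominating columns keeps the remaining chips dominating (a removed chip has $r_i = -1$, and $-1$ is the bottom of the order, so it is dominated by anything). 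Applying the induction hypothesis to these deeper states and then taking min over Remover and max over Pusher gives $F(G) \ge F(\sigma_p(G)) = \min_{\sigma_r} F(\sigma_r(\sigma_p(G))) \ge \min_{\sigma_r'} F(\sigma_r'(\sigma_p'(G'))) = F(\sigma_p'(G')) = F(G')$.

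The main obstacle — and the part that needs care rather than cleverness — is the bookkeeping around the permutation $\pi$ and the ``just pushed'' markers. Because boards are \emph{sets} of column states and the order $\ge$ on boards is defined up to a permutation matching, one has to check that the permutation witnessing $G \ge G'$ can be carried through a push and a removal so that it still witnesses $\sigma_r(\sigma_p(G)) \ge \sigma_r'(\sigma_p'(G'))$; a push does not touch which column is which, and a removal only needs Remover-in-$G$ to pick the column $\pi$-matched to Remover-in-$G'$'s choice, so $\pi$ is in fact preserved verbatim. One also has to confirm that within a single column state the coordinatewise comparison $r_i \ge r_i'$ is stable under the operations of incrementing a chosen subset of entries (push) and setting a chosen subset to $-1$ (removal), and that after a removal the entries are re-sorted into nonincreasing order without breaking domination — this is the standard fact that sorting two sequences, one coordinatewise $\ge$ the other, preserves the inequality. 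None of this is deep, but it is the crux of why the proposition is true, so I would write out the push-step and remove-step preservation as two short sublemmas and then assemble them into the induction above.
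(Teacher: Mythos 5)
Your proof is correct and follows exactly the route the paper intends: the paper simply asserts that Proposition~\ref{prop:comp} ``follows easily from an inductive argument using the recursive definition of $F$,'' and your induction on the game tree, with the move-lifting via the witnessing permutation and the sorting/removal preservation sublemmas, is precisely the fleshed-out version of that argument. The only quibble is the parenthetical claim that the depth of the tree rooted at $G$ dominates that of $G'$, which is neither obviously true nor needed, since inducting on the depth of the first argument (or the sum of depths) already suffices.
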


\begin{definition}
    Given $N$, $K$, and $\Gamma$, a \emph{winning closure} $C_p \subseteq \mathcal{G}_p(N, K, \Gamma)$ is a set of game states such that for any game state $G \in C_p \setminus \mathcal{W}(N, K, \Gamma)$, there exists a Pusher move $\sigma_p$ such that for any Remover move $\sigma_r$, $\sigma_r(\sigma_p(G)) \geq G'$ for some $G' \in C_p \cup \mathcal{W}(N, K, \Gamma)$.

    Similarly, a \emph{losing closure} $C_r \subseteq \mathcal{G}_p(N, K, \Gamma)$ is a set of game states such that for any game state $G \in C_r \setminus \mathcal{L}(N, K, \Gamma)$ and Pusher move $\sigma_p$, there exists a Remover move $\sigma_r$ such that $\sigma_r(\sigma_p(G)) \leq G'$ for some $G' \in C_r \cup \mathcal{L}(N, K, \Gamma)$.
\end{definition}

We observe that by the definition of $F$,
the set of game states $G \in \mathcal G_p(N,K,\Gamma)$ for which $F(G) = 1$ form a winning closure, and the set of game states  for which $F(G) = 0$ form a losing closure. This gives us the following proposition.

\begin{proposition}
    \label{thm:win-lose-closure}
    A game state $G \in \mathcal G_p(N,K,\Gamma)$ is a winning state if and only if $G \in C_p$ for some winning closure $C_p \subseteq \mathcal G_p(N,K,\Gamma)$. Similarly, $G$ is a losing state if and only if $G \in C_r$ for some losing closure $C_r \subseteq \mathcal G_p(N,K,\Gamma)$.
\end{proposition}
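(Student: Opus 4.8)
The plan is to prove both directions of each equivalence by combining the observation preceding the proposition with Proposition 2.12 (prop:comp). For the forward directions, I would take $C_p = \{G \in \mathcal{G}_p(N,K,\Gamma) : F(G) = 1\}$ and verify directly that it is a winning closure, and symmetrically $C_r = \{G : F(G) = 0\}$ is a losing closure; then if $G$ is winning (resp.\ losing), it lies in this particular closure, so it lies in \emph{some} closure. For the reverse directions, I would show that membership in \emph{any} winning closure forces $F(G) = 1$, and membership in any losing closure forces $F(G) = 0$.

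First I would spell out why $\{G : F(G)=1\}$ is a winning closure. Take $G$ in this set with $G \notin \mathcal{W}(N,K,\Gamma)$. By the recursive definition of $F$, since it is Pusher's move and $F(G) = \max_\sigma F(\sigma(G)) = 1$, there is a Pusher move $\sigma_p$ with $F(\sigma_p(G)) = 1$; now $\sigma_p(G)$ is a Remover-to-move state, so $F(\sigma_p(G)) = \min_{\sigma} F(\sigma(\sigma_p(G))) = 1$, meaning \emph{every} Remover move $\sigma_r$ yields $F(\sigma_r(\sigma_p(G))) = 1$. But $\sigma_r(\sigma_p(G))$ is again a Pusher-to-move state with $F$-value $1$, hence it either lies in $\mathcal{W}(N,K,\Gamma)$ or in $C_p$ itself; in either case it is $\geq$ some $G' \in C_p \cup \mathcal{W}(N,K,\Gamma)$ (namely itself, since $\geq$ is reflexive). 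This verifies the winning-closure condition. The losing-closure verification for $\{G : F(G) = 0\}$ is the mirror image, using $F(G) = \max_\sigma F(\sigma(G)) = 0$ to conclude every Pusher move gives value $0$, and $\min$ over Remover moves to extract one good Remover response.

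For the reverse direction, suppose $G \in C_p$ for some winning closure $C_p$. I would argue by induction on the game tree (equivalently, on the bounded number of remaining moves, which is finite as noted after the chip game definition) that $F(G) = 1$ for every $G \in C_p$. If $G \in \mathcal{W}(N,K,\Gamma)$ then $F(G) = 1$ by definition. Otherwise, the closure property gives a Pusher move $\sigma_p$ such that for all Remover moves $\sigma_r$, $\sigma_r(\sigma_p(G)) \geq G'$ for some $G' \in C_p \cup \mathcal{W}(N,K,\Gamma)$; by the inductive hypothesis $F(G') = 1$, and by Proposition 2.12 (prop:comp), $F(\sigma_r(\sigma_p(G))) \geq F(G') = 1$, so $F(\sigma_r(\sigma_p(G))) = 1$ for every $\sigma_r$. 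Hence $F(\sigma_p(G)) = \min_{\sigma_r} F(\sigma_r(\sigma_p(G))) = 1$, and therefore $F(G) = \max_{\sigma} F(\sigma(G)) \geq 1$, giving $F(G) = 1$. The losing-closure case is symmetric, using the $\leq$ direction of Proposition 2.12. The main obstacle — and it is a minor one — is bookkeeping the induction cleanly: the $\geq$ relation lets the successor state be strictly larger than a closure element rather than equal to it, so the induction must be phrased on a quantity that does not increase under taking successors (number of remaining moves), with Proposition 2.12 bridging the gap between the actual successor and the witnessing closure element.
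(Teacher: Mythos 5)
Your forward direction is correct and is exactly the paper's justification: the sets $\{G : F(G)=1\}$ and $\{G : F(G)=0\}$ are, respectively, a winning and a losing closure (the paper states only this observation and leaves the converse implicit). The genuine gap is in your reverse direction. You induct on ``the number of remaining moves'' and apply the inductive hypothesis to the witnessing closure element $G'$ with $\sigma_r(\sigma_p(G)) \geq G'$. But nothing makes $G'$ smaller than $G$ in any such measure: $G' \leq \sigma_r(\sigma_p(G))$ only says that each on-board chip of $G'$ corresponds to a chip of the successor in a row at least as high, so the chips of $G'$ may sit far \emph{lower} than those of $G$, and $G'$ may admit strictly more remaining rounds. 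Concretely, with $\Gamma=10$ and two columns, if $G$ has three chips at row $8$ in one column, then at most $6$ rounds remain from $G$; yet after one round the surviving chips sit at row $9$, and that successor dominates closure elements whose three chips sit at row $0$, from which $30$ rounds remain. Proposition \ref{prop:comp} cannot bridge this, since it compares $F$-values only once $F(G')=1$ is known, which is precisely what the induction has not yet established; as written, the recursion is not well founded.

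A correct argument must recurse on the actual successor rather than on the witness. Strengthen the claim to: every $H \in \mathcal{G}_p(N,K,\Gamma)$ with $H \geq G'$ for some $G' \in C_p \cup \mathcal{W}(N,K,\Gamma)$ satisfies $F(H)=1$, and induct on $\sum_i (\Gamma - r_i)$ taken over the chips on the board of $H$, which strictly decreases after each round. In the step, if $G' \in \mathcal{W}(N,K,\Gamma)$ then $H \in \mathcal{W}(N,K,\Gamma)$; otherwise transfer the closure's Pusher move from $G'$ to $H$ along the permutation realizing $H \geq G'$ (the chips pushed in $G'$ are on the board of $H$ in rows at least as high), verify that for every Remover reply the new actual state dominates the corresponding state $\sigma_r(\sigma_p(G'))$ and hence, by transitivity, some element of $C_p \cup \mathcal{W}(N,K,\Gamma)$, and apply the inductive hypothesis to that actual successor. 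This move-transfer step is an argument of the same flavor as the proof of Proposition \ref{prop:comp}, but it concerns moves rather than $F$-values, so it is not obtained by citing Proposition \ref{prop:comp} as a black box; equivalently, the converse can be phrased as a strategy-maintenance argument that invokes only the finiteness of the game. The losing-closure converse requires the mirrored repair.
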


\subsection{Algorithmic techniques}
In this section, we describe an algorithm that solves the following problem: given $N, K, \Gamma$, let $G_0 = (B, \Gamma, \text{Pusher})$ be the starting game state where
$B$ is a board consisting of 
$N$ column states, each with $K$ pairs of $(0, 0)$. The board $B$ represents the starting configuration of the $(\Gamma, K \star N)$ chip game.
Our goal is to check whether $G_0$ is a winning state (i.e. $F(G_0) = 1$). 
By Theorem \ref{thm:chippaint},
the paintability $\chi_P(K_{N \star K})$ is the smallest $\Gamma$ for which the initial state $G_0$ of the $(\Gamma, K \star N)$ chip game satisfies $F(G_0) = 0$.
By computing $F(G_0)$ for appropriate values of $\Gamma$,
we can find the exact value of the paintability of $K_{N \star K}$ for some fixed $N$ and $K$.

The recursive definition of the function $F$ allows us to compute the value of $F(G_0)$ by defining a tree of board states in which each leaf contains a state in which the game has ended, and then working backwards to compute the value of each intermediate game state.
We use the term \emph{minimax algorithm} to refer to an algorithm that computes $F(G_0)$ using the recursive definition of $F$.
However, a naive minimax algorithm needs to consider every possible game state that can be reached from the initial position and therefore is practically infeasible. 
Therefore, we develop several tools that help us construct a more efficient minimax algorithm that evaluates $F(G_0)$.

\subsubsection{Game state comparison}
Given two game states $G,G' \in \mathcal G_p(N,K,\Gamma)$, Proposition \ref{prop:comp} implies that if $G \geq G'$, then $F(G) \geq F(G')$.
Therefore, if we know that $F(G') = 1$, then we can conclude that $F(G) = 1$; similarly, if we know that $F(G) = 0$, then we can conclude that $F(G') = 0$. 
Below, we describe an efficient procedure for checking whether $G \geq G'$ for two game states $G,G' \in \mathcal G_p(N,K,\Gamma)$.

 Given two game states $G,G' \in \mathcal G_p(N,K,\Gamma)$ with respective boards $B$ and $B'$, our goal is to check whether $B \geq B'$ in polynomial time. To begin, write
\[
    B = \{ C_1, C_2, \cdots, C_N \},
\]
\[
    B' = \{ C_1', C_2', \cdots, C_N' \}.
\]
First, we construct a \emph{helper graph} $H(B,B')$. We 
let $H(B, B')$ be a simple undirected unweighted helper graph with vertices
\[
    V(H) = \{ V_1, V_2, \cdots, V_N, V_1', V_2', \cdots, V_N' \}
\]
The vertices are named so that each vertex corresponds to a column state in either $B$ or $B'$. The edges are defined as
\[
    (V_i, V_j') \in E(H) \text{ if and only if } C_i \geq C_j'.
\]
Note that 
constructing $H(B,B')$ takes $O(N^2K)$ time.

We observe that 
    $H$ is a bipartite graph with partite sets $\{ V_1, V_2, \cdots, V_n \}$ and $\{ V_1', V_2', \cdots, V_n' \}$.
We also observe that by definition, $B \geq B'$ if and only if $H(B,B')$ has a perfect matching. Therefore, to determine whether $B \geq B'$, we apply the Hopcroft-Karp algorithm \cite{HK} to $H(B,B')$, which finds a maximum matching in $H(B,B')$ in 
 $O(|E|\sqrt{|V|}) = O(N^{2.5})$ time.
 In particular, the algorithm determines whether $H(B,B')$ has a perfect matching.
Therefore, given $G,G' \in \mathcal G_p(N,K,\Gamma)$, we can check whether $G \geq G'$ in $O(N^{2.5})$ time.

\subsubsection{Pruning}
In this subsection, 
we explore the following question:
Given a game state $G$, 
which Pusher/Remover moves are ``redundant"? If we can show that some moves are worse than other moves, we can assume that Pusher/Remover will not make those moves, thereby reducing the number of nodes the tree of board states involved in computing the value $F(G_0)$ for the initial board state $G_0 \in \mathcal G_p(N,K,\Gamma)$.

Consider a game with state $G$ and Pusher being the next to move. We define \emph{better (worse) than} as follows:

\begin{definition}
    Given $N$, $K$, and $\Gamma$, let $G \in \mathcal{G}_p(N, K, \Gamma)$. Let $\sigma_p, \sigma_p' \in \{ 0, 1 \}^{NK}$
    be two distinct Pusher
    moves. We say $\sigma_p$ is \emph{worse than} $\sigma_p'$ (or $\sigma_p'$ is \emph{better than} $\sigma_p$) if
    $F(\sigma_p(G)) \leq F(\sigma'_p(G))$.
\end{definition}

In other words, if $\sigma_p$ is worse than $\sigma_p'$, Pusher will always prefer choosing $\sigma_p'$ over $\sigma_p$ since $\sigma_p$ cannot possibly achieve a better result than $\sigma_p'$. Note that the term better/worse is dependent on the current game state $G$. If $\sigma_p$ is worse than $\sigma_p'$ for one game state $G$, the same might not hold for another game state $G'$.
When considering a game state $G \in \mathcal G_p(N,K,\Gamma)$, if we know that a Pusher move $\sigma_p$ is worse than another Pusher move $\sigma_p'$, then we may assume that Pusher does not play $\sigma_p$.

The following lemma gives a sufficient condition for a Pusher move $\sigma_p$ to be worse than another move $\sigma_p'$. In particular, the lemma implies that if a game state $G \in \mathcal G_p(N,K,\Gamma)$ contains two identical columns $C_1$ and $C_2$, then a move $\sigma_p$ that pushes a chip in $C_1$ but no chip in $C_2$ is suboptimal. This lemma is particularly useful for pruning Pusher moves in early states of the chip game, when many columns are likely to be identical.

\begin{lemma}
\label{lem:same-columns}
    Suppose that a board state $G = (\{ C_1, C_2, C_3, \cdots, C_N \}, \Gamma, \text{Pusher} )$ has two identical columns $C_1 = C_2$. Let $\sigma_p$ and $\sigma_p'$ be two Pusher moves such that
    \[
        \sigma_p(G) = \{ \{ C_1', C_2', C_3', \cdots, C_N' \}, \Gamma, \text{Remover} \}
    \]
    \[
        \sigma_p'(G) = \{ \{ C_1', C_1', C_3', \cdots, C_N' \}, \Gamma, \text{Remover} \}
    \]
    (Note that $\sigma_p(G)$ and $\sigma'_p(G)$ differ in the second column.) If $C_1' \geq C_2'$, then $\sigma_p$ is worse than $\sigma_p'$.
\end{lemma}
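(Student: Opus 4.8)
\emph{Proof plan.} My plan is to argue one move deep. After either Pusher move it is Remover's turn, so by the recursive definition of $F$ we have $F(\sigma_p(G)) = \min_{1\le j\le N} F(\rho_j(\sigma_p(G)))$ and $F(\sigma_p'(G)) = \min_{1\le j\le N} F(\rho_j(\sigma_p'(G)))$, where $\rho_j$ denotes the Remover move that chooses column $j$, namely the move that turns every just-pushed chip of column $j$ into a removed chip and then resets all $m$-bits to $0$. Both $\rho_j(\sigma_p(G))$ and $\rho_j(\sigma_p'(G))$ lie in $\mathcal G_p(N,K,\Gamma)$, so Proposition~\ref{prop:comp} applies to them. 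Since $\sigma_p$ being worse than $\sigma_p'$ means precisely that $F(\sigma_p(G)) \le F(\sigma_p'(G))$, it suffices to fix a column $j^*$ attaining the minimum for $\sigma_p'(G)$ and to exhibit a Remover reply to $\sigma_p(G)$ of value at most $F(\rho_{j^*}(\sigma_p'(G)))$. The hypothesis $C_1=C_2$ is exactly what guarantees that $\sigma_p'$ is a legal Pusher move; it plays no further role.

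For every index $j \ne 2$ the desired comparison is immediate. Clearing $m$-bits does not alter any $r$-coordinate, and Remover's removal touches only one column, so the boards underlying $\rho_j(\sigma_p(G))$ and $\rho_j(\sigma_p'(G))$ agree in every column except column $2$, where the former carries the $m$-cleared form of $C_2'$ and the latter the $m$-cleared form of $C_1'$. (When $j=1$ the first column of each is the same removal applied to $C_1'$; when $j\ge 3$ the first columns are both the cleared $C_1'$ and the $j$-th columns are both the removal applied to $C_j'$.) Since the hypothesis $C_1' \ge C_2'$ is a statement about $r$-coordinates only, it persists under clearing of $m$-bits, so $\rho_j(\sigma_p(G)) \le \rho_j(\sigma_p'(G))$ in the partial order on $\mathcal G_p(N,K,\Gamma)$, and Proposition~\ref{prop:comp} yields $F(\rho_j(\sigma_p(G))) \le F(\rho_j(\sigma_p'(G)))$.

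The case $j^* = 2$ escapes this pattern, and this is the main obstacle: removing column $2$ deletes just-pushed chips from $C_2'$ in the board of $\sigma_p(G)$ but from $C_1'$ in the board of $\sigma_p'(G)$, and the first deletion need not be dominated by the second. I would get around this by exploiting that the first two columns of $\sigma_p'(G)$ are identical copies of $C_1'$: the board obtained from $\sigma_p'(G)$ by removing column $2$ is obtained from the board for removing column $1$ simply by transposing columns $1$ and $2$, so the two boards are permutations of one another and hence have the same $F$-value (each dominates the other in the board partial order, and Proposition~\ref{prop:comp} applies in both directions). Thus $F(\rho_2(\sigma_p'(G))) = F(\rho_1(\sigma_p'(G)))$, so without loss of generality $j^* \ne 2$, and then, using the previous paragraph, $F(\sigma_p(G)) = \min_j F(\rho_j(\sigma_p(G))) \le F(\rho_{j^*}(\sigma_p(G))) \le F(\rho_{j^*}(\sigma_p'(G))) = F(\sigma_p'(G))$, which is the assertion that $\sigma_p$ is worse than $\sigma_p'$.

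What remains is routine bookkeeping that I would verify but not dwell on: that $\rho_j$ preserves the number of columns, the number of chips per column, and the threshold, and returns the move to Pusher, so that Proposition~\ref{prop:comp} applies even when a successor state is already winning or losing; that every $1 \le j \le N$ is a legal Remover move; and that re-sorting a column after a removal is harmless for the partial-order comparisons, since in each column where two boards are being equated it is the same column state that is re-sorted in the same way. The whole content of the lemma is the $j^*=2$ symmetry reduction together with the monotonicity of $F$ from Proposition~\ref{prop:comp}.
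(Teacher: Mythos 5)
Your proof is correct and follows essentially the same route as the paper's: fix Remover's best reply to $\sigma_p'(G)$, use the symmetry of the two identical copies of $C_1'$ to assume without loss of generality that this reply is not column $2$, and then conclude via the monotonicity of $F$ (Proposition~\ref{prop:comp}) from $C_1' \geq C_2'$. The only difference is cosmetic: you argue directly with the $\min$ over Remover moves, while the paper phrases the same comparison as a proof by contradiction.
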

\begin{proof}
    We prove the lemma by contradiction. Suppose $\sigma_p$ is not worse than $\sigma_p'$,
    so that $F(\sigma_p(G)) = 1$ and $F(\sigma'_p(G)) = 0$.
    Then,
    \begin{enumerate}
        \item For all Remover moves $\sigma_r$, $\sigma_r(\sigma_p(G))$ is a winning state;
        \item There exists a Remover move $\sigma_r$ such that $\sigma_r(\sigma_p'(G))$ is a losing state.
    \end{enumerate}

    Fix a Remover move $\sigma_r$ for which $\sigma_r(\sigma_p'(G))$ is a losing state. Since columns $1$ and $2$ are identical in $\sigma_p'(G)$, we can switch the labels of the two columns if Remover chooses to remove Column $2$. Therefore, we assume without loss of generality that $\sigma_r \neq 2$. After $\sigma_p'$ and $\sigma_r$, we have the game state
    \[
        \sigma_r(\sigma_p'(G)) = \{ C_1'', C_1', C_3'', \cdots, C_N'' \}.
    \]

    Next, we consider the state $\sigma_r(\sigma_p(G))$ for the same $\sigma_r$. The resulting game state is identical to $\sigma_r(\sigma_p'(G))$ except for the second column:
    \[
        \sigma_r(\sigma_p(G)) = \{ C_1'', C_2', C_3'', \cdots, C_N'' \}
    \]

    Since $C_1' \geq C_2'$, we have $\sigma_r(\sigma_p'(G)) \geq \sigma_r(\sigma_p(G))$. By Proposition \ref{prop:comp}, since $\sigma_r(\sigma_p'(G))$ is a losing state, $\sigma_r(\sigma_p(G))$ is also a losing state, contradiction. Therefore, $\sigma_p$ is worse than $\sigma_p'$.
\end{proof}

We use a similar approach to prune Remover moves. 
\begin{definition}
    Given $N$, $K$, and $\Gamma$, let $G \in \mathcal{G}_r(N, K, \Gamma)$. Let $\sigma_r, \sigma_r' \in \{ 1, 2, \cdots, N \}$ be two different moves. We say $\sigma_r$ is \emph{worse than} $\sigma_r'$ (or $\sigma_r'$ is \emph{better than} $\sigma_r$) if $F(\sigma_r(G)) \geq F(\sigma_r'(G))$.
\end{definition}
According to Proposition \ref{prop:comp}, 
if
    $\sigma_r(G) \geq \sigma_r'(G)$, then
    $\sigma_r$ is worse than  $\sigma_r'$.
Therefore, if $\sigma_r$ is worse than $\sigma_r'$, then we may assume without loss of generality that Remover does not play $\sigma_r$.

The algorithmic methods described above allow us to carry out the following procedure to determine whether the $(\Gamma, K \star N)$ chip game is winning or losing for Pusher, as follows. We aim to compute $F(B)$ for the initial state $B$ of the $(\Gamma, K \star N)$ chip game
using the recursive definition of $F$. When computing $F(B)$ recursively, 
we create and update a set of winning and losing states. If we find that some state $G \in \mathcal G_p(N,K,\Gamma)$ satisfies $G \geq G'$ for some winning state $G'$, then we can immediately conclude that $F(G) = 1$ without further computation. Similarly, if we find that some state $G \in \mathcal G_p(N,K,\Gamma)$ satisfies $G \leq G'$ for some losing state $G'$, then we can immediately conclude that $F(G') = 0$ without further computation. 
Also, given a state 
$G \in \mathcal G_p(N,K,\Gamma)$, we assume without loss of generality that Pusher does not play a move $\sigma_p$ that is worse than another move $\sigma_p'$. Similarly, given a state 
$G \in \mathcal G_r(N,K,\Gamma)$, we may assume without loss of generality that Remover does not play a move $\sigma_r$ that is worse than another move $\sigma_r'$.
In particular, when considering game states $G \in \mathcal G_p(N,K,\Gamma)$ for which multiple columns are identical, we use Lemma \ref{lem:same-columns}
to reduce the number of Pusher moves that we need to consider. Using these techniques, we find either a winning or losing closure containing the initial game state $G_0$ and thereby compute the value of $F(G_0)$.


\subsection{Verification Algorithm}
By using the algorithms outlined above, we can construct a more efficient minimax algorithm to determine whether the initial state $G_0$ of the 
 $(\Gamma, K \star N)$ chip game is winning or losing for Pusher.
 When our minimax algorithm computes $F(G_0)$, it in fact computes
a value $F(G)$ for many game states $G \in \mathcal G_p(N,K,\Gamma)$.
In particular, the algorithm outputs a winning close and a losing closure, and $G_0$ is contained in one of these closures.
By Theorem \ref{thm:win-lose-closure}, if $G_0$ is in a winning (losing) closure, then it is a winning (losing) state.
Therefore,
in order to verify the correctness of our minimax algorithm, we can use an independent verification algorithm to verify that the winning (losing)
closure containing $G_0$ is indeed a winning (losing) closure.

\textbf{Winning closure verification:}
The following algorithm verifies whether a given set $\mathcal S \subseteq \mathcal G_p(N,K,\Gamma)$ is a winning closure. In particular, if $\mathcal S$ is a winning closure containing the initial game state $G_0$, then the following algorithm can verify that Pusher wins the $(\Gamma, K \star N)$ chip game 
with optimal play. The algorithm is as follows.

We input a set $\mathcal S \subseteq \mathcal G_p(N,K,\Gamma)$ of game states. For each $G \in \mathcal S$, we execute the following procedure to determine whether $G$ is \emph{good} or \emph{bad}:
\begin{enumerate}
\item First, we check if some chip of $G$ is at or above row $\Gamma$; if so, then we say that $G$ is good.
\item Next, we search for a Pusher move $\sigma_p$ such that for all Remover responses $\sigma_r$ to $\sigma_p$,
$\sigma_r(\sigma_p(G)) \geq G'$ for some $G' \in \mathcal S$.
If such a Pusher move $\sigma_p$ exists, then we say $G$ is good. Otherwise, we say $G$ is bad.
\end{enumerate}
If every state $S \in \mathcal S$ is good, then we return \texttt{True}. If some state $S \in \mathcal S$ is bad, then we return \texttt{False}.

\textbf{Losing closure verification:}
The following algorithm verifies whether a given set $\mathcal S \subseteq \mathcal G_p(N,K,\Gamma)$ is a losing closure. In particular, if $\mathcal S$ is a losing closure containing the initial game state $G_0$, then the following algorithm can verify that Remover wins the $(\Gamma, K \star N)$ chip game 
with optimal play. The algorithm is as follows.

We input a set $\mathcal S  \subseteq \mathcal G_p(N,K,\Gamma)$  of game states. For each $G \in \mathcal S$, we execute the following procedure to determine whether $S$ is \emph{good} or \emph{bad}:
\begin{enumerate}
\item First, we check if some chip of $G$ is at or above row $\Gamma$; if so, then we say that $G$ is bad.
\item Next, we check whether all chips in $G$ are removed from the board. If all chips are removed from the board, then we say that $G$ is good.
\item For each Pusher move $\sigma_p$, we check whether Remover has a response move $\sigma_r$
such that $\sigma_r(\sigma_p(G)) \leq S'$ for some state $S' \in \mathcal S$.
If such a Remover move $\sigma_r$ exists for each Pusher move $\sigma_p$, then we say $G$ is good. Otherwise, we say $G$ is bad.
\end{enumerate}
If every state $S \in \mathcal S$ is good, then we return \texttt{True}. If some state $S \in C$ is bad, then we return \texttt{False}.

We note that the verification algorithms for checking whether a set $\mathcal S \subseteq \mathcal G_p(N,K,\Gamma)$ is a winning or losing closure run independently of the minimax algorithm used to produce $\mathcal S$. 
The verification algorithms also have the advantage of being much simpler than the algorithm that computes the value $F(G_0)$ for the initial state $G_0 \in \mathcal G(N,K,\Gamma)$. For the reader who is interested in verifying the correctness of our computational results, we recommend checking the verification algorithms due to their simplicity.

\subsection{Our Results}
By using our minimax algorithm to compute the value $F(G_0)$ for the initial state $G_0$ of various chip games, 
and by verifying this value $F(G_0)$ using our verification algorithms, 
we obtain the following results. The code for our algorithms is available at \texttt{https://github.com/Amitten77/IML-Paintability}.

\begin{theorem}
\label{thm:results}
The following hold:
\begin{itemize}
    \item $\chi_P(K_{3 \star 4}) = 5$.
    \item $\chi_P(K_{3 \star 5}) = \chi_P(K_{2 \star 3,3 \star 3}) = \chi_P(K_{2\star 2,3 \star 4}) = 7$.
    \item $\chi_P(K_{2 \star 1, 3 \star 5}) = \chi_P(K_{3 \star 6}) = \chi_P(K_{2 \star 4, 3 \star 3})  = \chi_P(K_{2 \star 3, 3 \star 4}) = 8$.
    \item $\chi_P(K_{2 \star 2, 3 \star 5}) = 9$.
\end{itemize}
\end{theorem}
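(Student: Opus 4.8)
The plan is to reduce every assertion in Theorem~\ref{thm:results} to a finite collection of chip-game decisions and to settle each decision with the pruned minimax algorithm described in this section, certified by the independent verification routine. Each claim has the form $\chi_P(G) = t$ for an explicit $G = K_{n_1,\dots,n_m}$ with all $n_i \in \{2,3\}$, and by Theorem~\ref{thm:chippaint} this is equivalent to the conjunction of two chip-game facts: Pusher wins the $(t-1,n_1,\dots,n_m)$ chip game, giving $\chi_P(G) > t-1$; and Pusher loses the $(t,n_1,\dots,n_m)$ chip game, giving $\chi_P(G) \le t$. For the balanced graphs $K_{3\star n}$ with $n \in \{4,5,6\}$, Kierstead's formula $\ch(K_{3\star n}) = \lceil (4n-1)/3\rceil$ together with $\ch \le \chi_P$ already supplies the lower direction, so only the upper direction—Remover wins the $(\lceil (4n-1)/3\rceil, 3\star n)$ chip game—needs to be checked computationally; the bounds in~(\ref{eqn:K3n}) also confirm that no other value of $\Gamma$ is possible. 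For the unbalanced graphs, where no sufficiently tight bounds are on record, both directions are established by the algorithm, with the relevant values of $\Gamma$ bracketed by $\chi(G) \le \chi_P(G) \le |V(G)|$ and by the monotonicity of $r$-paintability in $r$.

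For a fixed game $(\Gamma, n_1,\dots,n_m)$ the computation evaluates $F(G_0)$ on the initial state $G_0$ through the recursive definition of $F$, with the search tree pruned in three ways. We maintain growing stores of known winning and known losing states; by Proposition~\ref{prop:comp}, any state dominating a stored winning state is declared winning and any state dominated by a stored losing state is declared losing, where the dominance order $G \ge G'$ is tested in polynomial time via Hopcroft--Karp on the helper graph $H(B,B')$. We treat game states up to permutation of their columns, collapsing symmetric branches. And we drop redundant moves: by Lemma~\ref{lem:same-columns}, Pusher never advances a chip in one of two equal columns without advancing the matching chip in the other, and by Proposition~\ref{prop:comp} Remover discards a move whose outcome dominates that of another Remover move. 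The output is an explicit finite set $\mathcal S \subseteq \mathcal G_p(N,K,\Gamma)$ containing $G_0$, proposed as a winning closure when $F(G_0) = 1$ and as a losing closure when $F(G_0) = 0$.

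The soundness of each claimed value rests only on the verification step, which runs independently of the heuristics used to build $\mathcal S$: we feed $\mathcal S$ to the winning-closure (respectively losing-closure) verification algorithm above, and if it returns \texttt{True}, then Proposition~\ref{thm:win-lose-closure} guarantees that $G_0$ is a winning (respectively losing) state, i.e.\ that Pusher wins (respectively loses) the chip game in question. Carrying this out for every graph and threshold in the statement and then invoking Theorem~\ref{thm:chippaint} yields all four bullet points; the code that produces the closures $\mathcal S$ and the code that verifies them are provided at the repository cited above, so a reader can re-run the verification directly.

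The main obstacle is computational rather than conceptual: the reachable state space grows rapidly in $N$, $K$, and especially $\Gamma$, so a naive minimax is infeasible, and the pruned search is most delicate exactly at the threshold value of $\Gamma$, where neither dominance shortcut fires early and a large frontier must be explored before a certifying closure is found. Making the harder cases tractable—in particular the graphs with $m = 7$ parts and those with $\Gamma$ as large as $9$—relies on canonicalizing states under column permutations, using the dominance order aggressively so that a few extremal winning/losing states subsume many others, and ordering candidate Pusher and Remover moves so that a certifying closure is reached quickly. None of these choices affects correctness, which is carried entirely by the independently checkable closures; they are simply what makes the computation finish in practice.
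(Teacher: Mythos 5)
Your proposal is correct and follows essentially the same route as the paper: each value $\chi_P(G)=t$ is reduced via Theorem~\ref{thm:chippaint} to showing Pusher wins the threshold-$(t-1)$ chip game and loses the threshold-$t$ chip game, both settled by the pruned minimax computation and certified by the independent winning/losing-closure verification (Propositions~\ref{prop:comp} and~\ref{thm:win-lose-closure}, Lemma~\ref{lem:same-columns}). The only cosmetic difference is that you import the lower bounds for $K_{3\star n}$ from Kierstead's choosability formula rather than recomputing them, which is a harmless shortcut the paper's table also implicitly acknowledges.
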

\begin{proof}
    First, we consider the graph $K_{3 \star 4}$. Our algorithms show that the initial state of the $(3 \star 4, 4)$ chip game is winning for Pusher, but the initial state of the $(3 \star 4, 5)$ chip game is losing for Pusher. Therefore, by Theorem \ref{thm:chippaint}, $\chi_P(K_{3 \star 4}) = 5$. 

    The proofs for the other graphs listed in the theorem are similar.
\end{proof}
We summarize the results of Theorem \ref{thm:results} in Table \ref{table} and compare them to known lower and upper bounds of the paintabilities of the graphs considered in the theorem. The upper bounds in the table follow from 
Equation (\ref{eqn:K3n}),
as do the lower bounds for the paintabilities of graphs of the form $K_{3 \star n}$. The other lower bounds follow from Equation (\ref{eqn:one-3}).

\begin{table}
\begin{center}
\begin{tabular}{|c|c|c|c|}
\hline
\textbf{Graph} & \textbf{Known lower bound} & \textbf{Our computed value} & \textbf{Known upper bound} \\ \hline
$K_{3 \star 4}$ & 5 & 5 & 6 \\ \hline
$K_{3 \star 5}$ & 7 & 7 & 7 \\ \hline
$K_{2 \star 3, 3 \star 3}$ & 7 & 7 & 9 \\ \hline
$K_{2 \star 2, 3 \star 4}$ & 7 & 7 & 9 \\ \hline
$K_{2 \star 1, 3 \star 5}$ & 7 & 8 & 9 \\ \hline
$K_{3 \star 6}$ & 8 & 8 & 9 \\ \hline
$K_{2 \star 4, 3 \star 3}$ & 8 & 8 & 10 \\ \hline
$K_{2 \star 3, 3 \star 4}$ & 8 & 8 & 10 \\ \hline
$K_{2 \star 2, 3 \star 5}$ & 8 & 9 & 10 \\ \hline
\end{tabular}
\end{center}
\caption{The table shows the paintabilities of various complete multipartite graphs, compared with their previously known lower and upper bounds.}
\label{table}
\end{table}


\section{Online panchromatic hypergraph coloring}
\label{sec:hyper}
\subsection{Background}
We now pivot to hypergraph coloring, which was first shown to be closely related to the choosability of multipartite graphs by Erd\H{o}s, Rubin, and Taylor \cite{erdos1979choosability}.
    A \emph{hypergraph} $ H = (V, E) $ is a collection of vertices $ V $ and (hyper)edges $ E, $ where each hyperedge is a subset of $V$.
    A hypergraph is \emph{$ k $-uniform} if every hyperedge has cardinality $ k. $ In particular, a 2-uniform hypergraph is simply a graph.
%
    A \emph{panchromatic $ r $-coloring} of a hypergraph $ H = (V, E) $ is an $ r $-vertex coloring $ f : V \rightarrow \{1, \ldots, r\}, $ such that every hyperedge of $ H $ contains a vertex of each color $1, \dots, r$.
    Additionally, $ p(k, r) $ is
    the minimum number of hyperedges in a $ k $-uniform hypergraph $ H $ such that $ H $ has no panchromatic $ r $-coloring. 
Note that every hyperedge of $ H $ must have size at least $ r $ in order to admit a panchromatic $ r $-coloring.

The following theorem of Kostochka \cite{Kostochka20021} shows that panchromatic hypergraph coloring is closely related to list coloring of multipartite graphs. 
\begin{theorem}\label{prop:ert}
    Let $ N(k, r) $ be the minimum number of vertices in an $ r $-partite graph that is not $ k $-choosable. Then, for any $ r \geq 2 $ and $ k \geq 2, $
    \[ p(k, r) \leq N(k, r) \leq rp(k, r). \]
\end{theorem}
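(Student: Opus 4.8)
The plan is to prove the two inequalities $p(k,r) \leq N(k,r)$ and $N(k,r) \leq r\,p(k,r)$ separately, each by an explicit construction that converts an extremal object on one side into a feasible object on the other. Both directions rest on the same dictionary: the colors appearing in a list assignment of an $r$-partite graph play the role of the \emph{vertices} of a $k$-uniform hypergraph, and the lists themselves play the role of the \emph{hyperedges}; a proper list-coloring corresponds to a coloring of the colors, and ``every part's vertex gets hit by its own color class'' is exactly the panchromatic condition. I would first record that for $r \geq 2$ and $k \geq 2$ both extremal parameters are finite, so the extremal objects exist, and that in any witness of non-$k$-choosability we may shrink every list to a set of size exactly $k$ (this only makes the graph harder to list-color), so that the associated hyperedges are genuinely $k$-uniform.

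For $p(k,r) \leq N(k,r)$, start with an $r$-partite graph $G$ on $N(k,r)$ vertices with parts $A_1,\dots,A_r$ that is not $k$-choosable, witnessed by a list assignment $L$ with $|L(v)| = k$ for all $v$. Let $C = \bigcup_{v \in V(G)} L(v)$ and let $H$ be the $k$-uniform hypergraph on vertex set $C$ whose hyperedges are the sets $L(v)$, $v \in V(G)$; then $H$ has at most $N(k,r)$ hyperedges. I claim $H$ has no panchromatic $r$-coloring. Given any $r$-coloring $f : C \to \{1,\dots,r\}$ in which every hyperedge contains a vertex of every color, pick for each $v \in A_i$ a color $c(v) \in L(v)$ with $f(c(v)) = i$; this is possible since $L(v)$ is a hyperedge and hence meets class $i$. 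If $u \in A_i$ and $v \in A_j$ are adjacent then $i \neq j$ because $G$ is $r$-partite, so $c(u) \neq c(v)$; thus $c$ is a proper $L$-coloring of $G$, contradicting the choice of $L$. Hence $p(k,r) \leq |E(H)| \leq N(k,r)$.

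For $N(k,r) \leq r\,p(k,r)$, run the dictionary the other way. Let $H$ be a $k$-uniform hypergraph on a vertex set $C$ with exactly $p(k,r)$ hyperedges and no panchromatic $r$-coloring. Form the complete $r$-partite graph $G$ with parts $A_1,\dots,A_r$, where for each $i \in \{1,\dots,r\}$ and each hyperedge $e \in E(H)$ we place a vertex $v_{i,e} \in A_i$ with list $L(v_{i,e}) = e$; then $|V(G)| = r\,p(k,r)$ and every list has size $k$. It remains to show $G$ has no proper $L$-coloring. Suppose $c$ is one. Since $G$ is complete multipartite, vertices in distinct parts are adjacent, so each color $\gamma \in C$ is used by $c$ in at most one part; define $f : C \to \{1,\dots,r\}$ by $f(\gamma) = i$ if $\gamma$ is used in $A_i$, and $f(\gamma) = 1$ otherwise. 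For any hyperedge $e$ and any index $i$, the vertex $v_{i,e} \in A_i$ satisfies $c(v_{i,e}) \in e$ and $f(c(v_{i,e})) = i$, so $e$ contains a vertex of every color class of $f$; hence $f$ is a panchromatic $r$-coloring of $H$, a contradiction. So $G$ is not $k$-choosable, giving $N(k,r) \leq r\,p(k,r)$.

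I do not expect a genuine obstacle, but the step requiring the most care is the verification in the second direction: one must use completeness of the multipartite graph to see that a proper $L$-coloring uses each color in at most one part, and then notice that the single vertex $v_{i,e}$ placed in each part forces the hyperedge $e$ to simultaneously receive a vertex of color $i$ for every $i$, which is precisely what panchromaticity demands. The only other fussy point is the reduction to lists of size exactly $k$ mentioned at the start, which is what guarantees that the hyperedges in the first direction are $k$-uniform rather than merely of size at least $k$.
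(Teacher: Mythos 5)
Your proof is correct, and both directions are carried out cleanly: the reduction to lists of size exactly $k$, the use of $r$-partiteness to get a proper $L$-coloring in the first direction, and the use of completeness to ensure each color appears in at most one part in the second direction are exactly the points that need care. Note that the paper itself states this theorem as a cited result of Kostochka without giving a proof; your argument is the standard lists-as-hyperedges dictionary underlying that result, so there is nothing to reconcile.
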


Theorem 1.1 in Cherkashin \cite{CHERKASHIN2018652} showed via an extension of Erd\H{o}s' \cite{Erdős1964445} proof for $ r = 2 $ the following upper bound on $ p(k, r). $

\begin{theorem}\label{thm:pkr}
    For any $ r \geq 2 $ and $ k \geq 2, $ there exists some $ c > 0 $ such that
    \[ p(k, r) \leq c \frac{k^2 \ln r}{r} \left( \frac{r}{r-1} \right)^k. \]
\end{theorem}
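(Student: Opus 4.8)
The plan is to prove the bound by a first-moment (probabilistic) argument: construct a random $k$-uniform hypergraph with the claimed number of edges and show that with positive probability it has no panchromatic $r$-coloring. We may assume $k \ge r$, the regime $k < r$ being degenerate since then no hyperedge can contain a vertex of each of the $r$ colors. Fix $n = 2k^2$, set $V = [n]$, and let $H$ be the random hypergraph on $V$ obtained by choosing $m$ hyperedges independently, each a uniformly random $k$-element subset of $V$, where $m$ will be chosen at the end. For a fixed coloring $f : [n] \to \{1,\dots,r\}$ with color classes of sizes $n_1,\dots,n_r$, let $q_f$ be the probability that a single random $k$-edge is panchromatic for $f$, i.e.\ meets every color class; then $f$ is a panchromatic coloring of $H$ with probability exactly $q_f^{\,m}$, so by a union bound over the $r^{\,n}$ colorings,
\[
    \Pr[H \text{ has a panchromatic } r\text{-coloring}] \;\le\; \sum_f q_f^{\,m} \;\le\; r^{\,n}\bigl(\max_f q_f\bigr)^m .
\]
It therefore suffices to bound $q_f$ uniformly in $f$ by a quantity small enough that the right-hand side drops below $1$ for some $m$ of the desired order.

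The heart of the argument is this uniform estimate on $q_f$. Writing $M_i$ for the event that a random $k$-edge misses color class $i$, we have $\Pr[M_i] = \binom{n-n_i}{k}/\binom{n}{k}$ and $q_f = \Pr\!\bigl[\bigcap_i \overline{M_i}\bigr]$. For $r \ge 3$ the naive Bonferroni bound $q_f \le 1 - \sum_i \Pr[M_i]$ is worthless because the right side can be negative, so instead I would use that the occupancy counts $(Y_1,\dots,Y_r)$ of the color classes in a uniformly random $k$-subset are negatively associated (sampling without replacement); since $\mathbf 1[Y_i \ge 1]$ is a nondecreasing function of $Y_i$, this gives
\[
    q_f = \Pr[Y_1 \ge 1,\dots,Y_r \ge 1] \le \prod_{i=1}^r \Pr[Y_i \ge 1] = \prod_{i=1}^r\bigl(1 - \Pr[M_i]\bigr) \le \exp\Bigl(-\textstyle\sum_{i=1}^r \Pr[M_i]\Bigr).
\]
To finish I would lower-bound $\sum_i \Pr[M_i]$: since $x \mapsto \binom{n-x}{k}$ is convex on $[0,n]$ and $\sum_i n_i = n$, Jensen's inequality yields $\sum_i \Pr[M_i] \ge r\binom{n-n/r}{k}/\binom{n}{k}$, and the choice $n = 2k^2$ makes $\binom{n-n/r}{k}/\binom{n}{k} \ge \tfrac12(1-1/r)^k$ (a routine estimate comparing a ratio of binomial coefficients to the corresponding power, in which taking $n = \Theta(k^2)$ is exactly what controls the error term). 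Hence $q_f \le \exp\!\bigl(-\tfrac{r}{2}(1-1/r)^k\bigr)$ for every $f$.

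Plugging this in gives $\Pr[H \text{ has a panchromatic coloring}] \le \exp\!\bigl(n\ln r - \tfrac{r}{2}(1-1/r)^k\, m\bigr)$, which is less than $1$ as soon as $m > \tfrac{2n\ln r}{r}(1-1/r)^{-k} = \tfrac{4k^2\ln r}{r}\bigl(\tfrac{r}{r-1}\bigr)^k$; for such an $m$ some $k$-uniform hypergraph on $n$ vertices with $m$ edges has no panchromatic $r$-coloring, so $p(k,r) \le m = O\!\bigl(\tfrac{k^2\ln r}{r}(\tfrac{r}{r-1})^k\bigr)$. I expect the main obstacle to be the second step: getting the bound $q_f \le \exp(-\sum_i \Pr[M_i])$ \emph{uniformly over all colorings}, including very unbalanced ones where $\sum_i \Pr[M_i]$ is large and no clean inclusion–exclusion identity is available — the negative-association route sidesteps this, but one must verify it applies to sampling without replacement so that the hyperedges are genuine $k$-sets. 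A secondary point requiring care is pinning down the correct vertex count $n = \Theta(k^2)$, which is precisely what produces the quadratic factor $k^2$ in the final bound and mirrors Erdős's original argument for $r = 2$.
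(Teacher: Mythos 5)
Your proposal is essentially correct, and it is worth noting that the paper itself does not prove this statement: Theorem \ref{thm:pkr} is quoted from Cherkashin, whose argument is an extension of Erd\H{o}s' first-moment proof for $r=2$. Your proof is of the same first-moment type but handles the key single-edge estimate differently. Cherkashin-style arguments take a vertex set of size $\Theta(k^2/r)$ and bound the probability that a random edge is panchromatic by $1-c'(1-1/r)^k$ using only the \emph{smallest} color class (which has size at most $n/r$); the union bound over $r^{\Theta(k^2/r)}$ colorings then yields $m = O\bigl(\tfrac{k^2\ln r}{r}(\tfrac{r}{r-1})^k\bigr)$. You instead take $n=2k^2$ vertices independent of $r$, and recover the lost factor of $r$ in the exponent by bounding $q_f \le \prod_i(1-\Pr[M_i]) \le \exp(-\sum_i\Pr[M_i])$ via negative association of the occupancy counts of a uniform $k$-subset, followed by Jensen. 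Both ingredients you flag as needing care do hold: the inclusion indicators of a simple random sample without replacement are negatively associated, sums over disjoint blocks inherit this, and monotone indicator functions then give the product bound; and the estimate $\binom{n-n/r}{k}/\binom{n}{k} \ge \tfrac12(1-1/r)^k$ for $n=2k^2$ follows from writing each factor as $(1-1/r)\bigl(1-\tfrac{j}{(r-1)(n-j)}\bigr)$ and summing the error terms (convexity of $x\mapsto\binom{n-x}{k}$ is fine after linear interpolation of the discrete values, whose increments $-\binom{n-j-1}{k-1}$ are nondecreasing). One cosmetic caveat: as stated with a universal constant the inequality is only meaningful when the right-hand side is at least $1$ (for fixed $k$ and $r\to\infty$ one has $p(k,r)=1$ while the bound tends to $0$), so your restriction to $k\ge r$ is the right reading rather than a gap; also, repeated edges in the random construction are harmless since deleting duplicates does not affect panchromatic colorability. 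What your route buys is a self-contained proof avoiding any tuning of the vertex count with $r$; what Cherkashin's route buys is avoiding the negative-association machinery by exploiting only one color class.
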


From Theorems \ref{prop:ert} and \ref{thm:pkr}, we get a lower bound on the choosability of $ K_{n \star r}. $

\begin{corollary}\label{cor:ch}
    If $ n \geq c \frac{k^2 \ln r}{r} \left( \frac{r}{r-1} \right)^k$, then $ \ch(K_{n \star r}) > k$. That is, $ N(k, r) \leq ck^2 \ln r \left( \frac{r}{r-1} \right)^k. $
\end{corollary}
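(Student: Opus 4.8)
The plan is to deduce Corollary~\ref{cor:ch} as an immediate consequence of Theorems~\ref{prop:ert} and~\ref{thm:pkr}, simply by tracking the definitions. First I would recall that $N(k,r)$ is defined as the minimum number of vertices in an $r$-partite graph that is not $k$-choosable, and that by Theorem~\ref{prop:ert} we have $N(k,r) \leq r\,p(k,r)$. Combining this with the bound $p(k,r) \leq c\,\frac{k^2 \ln r}{r}\left(\frac{r}{r-1}\right)^k$ from Theorem~\ref{thm:pkr} gives, after the factor of $r$ cancels the $1/r$,
\[
N(k,r) \leq r \cdot p(k,r) \leq r \cdot c\,\frac{k^2 \ln r}{r}\left(\frac{r}{r-1}\right)^k = c\,k^2 \ln r \left(\frac{r}{r-1}\right)^k,
\]
which is the second assertion of the corollary.

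Next I would turn the vertex bound into a statement about $K_{n \star r}$. The graph $K_{n\star r}$ is $r$-partite with $rn$ vertices, and it is a standard monotonicity fact that if an $r$-partite graph on $v$ vertices fails to be $k$-choosable, then $K_{n\star r}$ with $rn \geq v$ also fails to be $k$-choosable: one can take the extremal $r$-partite non-$k$-choosable graph $G$ on $N(k,r)$ vertices, note $G$ is a subgraph of some $K_{n_1,\dots,n_r}$ with each $n_i$ at most $n$ (distributing its parts into the $r$ parts of size $n$), and observe that a complete multipartite graph containing $G$ as a spanning subgraph of a sub-multipartite structure is at least as hard to color. More carefully: if $n \geq N(k,r)$ then in particular $K_{n \star r}$ contains the extremal non-$k$-choosable $r$-partite graph as a subgraph obtained by placing its parts inside the $r$ parts of $K_{n\star r}$, and adding edges and vertices to a non-$k$-choosable graph (keeping it $r$-partite, so that $K_{n\star r}$ with the appropriate list assignment still forces a conflict) cannot make it $k$-choosable. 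Hence $\ch(K_{n\star r}) > k$. Since $N(k,r) \leq c\,k^2 \ln r \left(\frac{r}{r-1}\right)^k$, the hypothesis $n \geq c\,\frac{k^2 \ln r}{r}\left(\frac{r}{r-1}\right)^k$ — wait, I should be careful that the hypothesis as stated uses the weaker-looking bound $n \geq c\frac{k^2\ln r}{r}(\frac{r}{r-1})^k$; in fact the relevant monotonicity only needs $rn \geq N(k,r)$, i.e.\ $n \geq \frac{1}{r}N(k,r)$, and since $N(k,r) \leq r\,p(k,r)$ this is implied by $n \geq p(k,r)$, which in turn follows from $n \geq c\frac{k^2\ln r}{r}(\frac{r}{r-1})^k$ by Theorem~\ref{thm:pkr}. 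So the stated hypothesis is exactly what is needed.

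The only genuine content beyond arithmetic is the monotonicity claim that $K_{n\star r}$ containing a non-$k$-choosable $r$-partite graph forces $\ch(K_{n\star r}) > k$; I would state and justify this in one or two sentences, possibly citing that it is folklore (the extremal graph for $N(k,r)$ may be assumed complete multipartite, since adding edges between distinct parts never decreases choosability, and a complete $r$-partite graph with all parts of size at most $n$ is a subgraph of $K_{n\star r}$ with the same partition, and reusing the same list assignment on the relevant vertices and giving trivial lists elsewhere — or rather, choosability is monotone under adding vertices and edges within the multipartite structure). The main obstacle, such as it is, is purely expository: making sure the chain of inequalities is stated with the correct constant $c$ (the same $c$ as in Theorem~\ref{thm:pkr}) and that the two conclusions of the corollary — the conditional statement about $\ch(K_{n\star r})$ and the bound on $N(k,r)$ — are each derived cleanly. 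No new ideas are required; this is a corollary in the literal sense.
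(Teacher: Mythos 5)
Your derivation of the second assertion, $N(k,r) \leq c k^2 \ln r \left(\frac{r}{r-1}\right)^k$, is exactly the intended one: combine $N(k,r) \leq r\,p(k,r)$ from Theorem \ref{prop:ert} with Theorem \ref{thm:pkr}. The gap is in the first assertion. You try to get $\ch(K_{n\star r}) > k$ from the \emph{numerical} bound on $N(k,r)$ via a ``standard monotonicity fact'' that any $r$-partite non-$k$-choosable graph on $v \leq rn$ vertices forces $K_{n\star r}$ to be non-$k$-choosable. Your justification of that fact does not work: the extremal graph $G$ attaining $N(k,r)$ may have a part with more than $n$ vertices, and then it is not a subgraph of $K_{n\star r}$ at all --- you cannot ``distribute'' an oversized part across several parts of $K_{n\star r}$, because vertices of $G$ joined by an edge would land in a common part and the edge would be lost (e.g.\ $K_{1,m}$ with $m > n$ is not a subgraph of $K_{n,n}$ even when $m+1 \leq 2n$). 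Your fallback, ``the relevant monotonicity only needs $rn \geq N(k,r)$,'' is precisely the unproved statement; it amounts to claiming the vertex-minimal non-$k$-choosable $r$-partite graph is essentially balanced, which is neither proved by you nor a citable standard fact. Making the extremal graph complete multipartite (by adding edges) does not make it balanced, so that parenthetical does not repair the argument. Your chain also does not actually need $N(k,r)$ at this point --- you correctly reduce to $n \geq p(k,r)$ --- but you then route back through the flawed monotonicity claim instead of using $p(k,r)$ directly.

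The step the corollary actually rests on (implicit in the paper, and in Kostochka's proof of the inequality $N(k,r) \leq r\,p(k,r)$) is that the \emph{balanced} graph $K_{p(k,r)\star r}$ is itself not $k$-choosable: take a $k$-uniform hypergraph $H$ with $p(k,r)$ hyperedges and no panchromatic $r$-coloring, and in each of the $r$ parts assign the vertex sets of the hyperedges (viewed as sets of colors) as the lists of the $p(k,r)$ vertices of that part. An $L$-coloring of $K_{p(k,r)\star r}$ would color, for each part $i$ and each hyperedge $e_j$, some element of $e_j$, with the elements used in distinct parts distinct (the parts are completely joined); coloring each used element $x$ of $V(H)$ by the index of the part where it is used (and unused elements arbitrarily) yields a panchromatic $r$-coloring of $H$, a contradiction. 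Since the hypothesis together with Theorem \ref{thm:pkr} gives $n \geq c\frac{k^2\ln r}{r}\left(\frac{r}{r-1}\right)^k \geq p(k,r)$, we have $K_{p(k,r)\star r} \subseteq K_{n\star r}$ with each part mapped into a part, and choosability is monotone under taking supergraphs, so $\ch(K_{n\star r}) > k$. With this replacement for your second and third paragraphs, the proof is complete; the arithmetic and the bound on $N(k,r)$ in your first paragraph are fine as written.
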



Given the connection between  list coloring and hypergraph coloring, it is natural to find a connection between paintability and some form of online hypergraph coloring. Aslam and Dhagat \cite{ASLAM1993355} introduced online 2-coloring of $ k $-uniform hypergraphs and Duraj et al. \cite{duraj2015chip} demonstrated its connection to the paintability of $ K_{n,n}$. Later, Khuzieva et al.~\cite{khuzieva2017} extended online 2-coloring to online panchromatic $ r $-coloring, as follows.

    An \emph{online panchromatic $ r $-coloring} of a $ k $-uniform hypergraph with $ n $ hyperedges is defined 
    in terms of a game with a Presenter and a Colorer.
    Throughout the game, the players builds a vertex-colored hypergraph $\mathcal H$ one vertex at a time.
    The Presenter and Colorer take turns.
    On the Presenter's turn, the Presenter introduces a new vertex $ v $ and indicates to 
    which hyperedges of $\mathcal H$ $v$ belongs. 
    Presenter may not add $v$ to a hyperedge of $\mathcal H$ already containing $k$ vertices. On the Colorer's turn, Colorer gives $ v $ one of $ r $ colors.
    The game ends when $\mathcal H$ contains $n$ hyperedges, each with exactly $k$ vertices.
    
    At the end of the game, the Presenter wins if there exists a hyperedge with fewer than $ r $ colors.
    The Colorer wins if the hypergraph $\mathcal H$ has a panchromatic coloring.
    We write $ p_{OL}(k, r) $ for the minimum value $n$ such that the Presenter wins in an online panchromatic $ r $-coloring of a $ k $-uniform hypergraph with $n$ hyperedges.

\subsection{A symmetric chip game}

Aslam and Dhagat \cite{AslamDhagat} defined a chip game related to the online proper $r$-coloring of a $k$-hypergraph, in which Colorer's goal is simply to avoid creating a monochromatic edge. Similar to the chip game defined above, there are a Pusher and a Chooser who alternate turns. 
When $r = 2$, the chip game of Aslam and Dhagat is equivalent to the game of Duraj, Gutowski, and Kozik \cite{duraj2015chip} with two columns.

In order to compute small values of $ p_{OL} (k, r), $ we introduce a symmetric variant of the chip game of Duraj et al.~\cite{duraj2015chip}.
Akhmejanova, Bogdanov, and Chelnokov~\cite{Akh} also show that the values $p_{OL}(k,r)$ can be represented using a chip game.

\begin{definition}
    A \emph{symmetric} $ (k, n \star r) $ chip game is a $ (k, n \star r) $ chip game with the following additional constraint for Pusher.
    \begin{quote}
    At the beginning of the game, we label the chips in each column from $ 1 $ to $ n. $ On each turn, the Pusher chooses a subset $ T \subseteq \{1, \dots, n\} $ and pushes exactly those chips with a label in $ T. $
    \end{quote}
\end{definition}

%

We have the following connection between online panchromatic coloring of uniform hypergraphs and the chip game.

\begin{theorem}\label{thm:conn}
   Pusher has a winning strategy in the symmetric $(k,n \star r)$ chip game if and only if $p_{OL}(k,r) \leq n$.
\end{theorem}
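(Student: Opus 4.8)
The plan is to establish a bijective correspondence between plays of the online panchromatic $r$-coloring game on a $k$-uniform hypergraph with $n$ edges and plays of the symmetric $(k, n \star r)$ chip game, in the same spirit as the proof of Theorem~\ref{thm:chippaint}, and then argue that this correspondence matches winning strategies for the two ``adversary'' players (Presenter and Pusher). The key dictionary is as follows: the $n$ hyperedges of $\mathcal H$ correspond to the $n$ chip-labels $\{1, \dots, n\}$, and the $r$ columns correspond to the $r$ colors $\{1, \dots, r\}$. Thus the chip with label $j$ in column $c$ represents hyperedge $j$ together with the color $c$; the row of this chip will record how many vertices of color $c$ are \emph{still needed} (or, dually, how many of the $k$ slots of hyperedge $j$ have been filled by something other than color $c$), so that a chip reaching row $k$ corresponds to a hyperedge that has no vertex of color $c$ even after all $k$ of its vertices are placed. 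When Presenter introduces a vertex $v$ lying in a set $T \subseteq \{1, \dots, n\}$ of hyperedges, Pusher pushes exactly the chips with label in $T$ (in every column simultaneously, as the symmetric rule demands); when Colorer assigns color $c$ to $v$, Remover removes the pushed chips in column $c$ (those are exactly the hyperedges that just got a $c$-colored vertex, so they no longer ``need'' a $c$, i.e.\ those chips should not advance). The symmetric constraint on Pusher is precisely what encodes the fact that a single vertex $v$ belongs to a fixed subset of hyperedges but is simultaneously relevant to all $r$ colors.

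First I would set up this correspondence carefully as an invariant maintained round-by-round: after the round in which Colorer has colored the $t$-th vertex, for each hyperedge $j$ and each color $c$, the chip $(j,c)$ is still on the board if and only if hyperedge $j$ has not yet received a vertex of color $c$, and in that case its row equals the number of vertices currently in hyperedge $j$ (all of which necessarily have colors $\ne c$). One must check the edge cases built into the hypergraph game: Presenter may not add $v$ to a hyperedge already containing $k$ vertices (on the chip side, once a chip has reached row $k$ the game is already over, and once a hyperedge is ``full'' every chip $(j,c)$ for that $j$ is either removed or at a row that cannot be pushed without ending the game), and the game ends only when all $n$ hyperedges have exactly $k$ vertices. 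Under the invariant, a hyperedge $j$ ends up with fewer than $r$ colors if and only if some color $c$ is missing from $j$ at the end, which happens if and only if chip $(j,c)$ survived to the end; since the hyperedge then has all $k$ of its vertices and none is colored $c$, this corresponds exactly to chip $(j,c)$ sitting at row $k$. So Presenter wins the hypergraph game if and only if Pusher wins the chip game, under matched strategies.

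Then I would spell out both directions. For the forward direction: if $p_{OL}(k,r) \le n$, Presenter has a winning strategy $\Sigma$ in the online panchromatic $r$-coloring game with $n$ edges; Pusher simulates an instance of that game, translating each Presenter move of $\Sigma$ into the corresponding (symmetric) Pusher move and each Remover column choice into the corresponding Colorer move, and by the invariant Pusher forces a chip to row $k$. For the converse: a Pusher winning strategy in the symmetric $(k,n\star r)$ chip game translates, via the same dictionary, into a Presenter strategy forcing some hyperedge to miss a color; one small point to address is that the chip game as literally defined has no termination clause at ``$n$ full hyperedges,'' but since a symmetric Pusher strategy can only push chips of labels among $\{1,\dots,n\}$ and the game ends after fewer than $k \cdot nr$ turns anyway, any winning Pusher play reaches a row-$k$ chip in finitely many rounds, which we can arrange to coincide with a legal finite play of the hypergraph game. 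I expect the main obstacle to be stating the invariant in exactly the right form so that the symmetric constraint on Pusher lines up cleanly with the ``one vertex, one color, many hyperedges'' structure of Presenter's move — in particular making sure that ``Pusher pushes the same label-set $T$ in every column'' is both necessary (Presenter places one vertex) and sufficient (nothing is lost by not pushing columns independently), and handling the bookkeeping of which chips are ``eligible for removal'' ($m_i = 1$) consistently with the rule that Colorer assigns a single color to the newly presented vertex.
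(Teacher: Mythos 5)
Your proposal is correct and rests on essentially the same correspondence as the paper's proof: hyperedges as chip labels, colors as columns, Presenter's placement of a vertex in a set of hyperedges as a symmetric Pusher move on that label set, Colorer's color choice as Remover's column choice, with a chip surviving to row $k$ witnessing a full hyperedge that misses a color. The only cosmetic difference is that for the ``if'' direction you translate a Presenter winning strategy directly into a Pusher strategy, whereas the paper argues the contrapositive by translating a Remover winning strategy into a Colorer strategy; both directions use the same dictionary.
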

\begin{proof}
  Suppose first that 
  Pusher has no winning strategy in the $ (k,n \star r) $ chip game. 
  We show that  Colorer has a winning strategy in the online $k$-uniform hypergraph coloring game played with $r$ edges and $k$ colors, so that $p_{OL}(k,r) > n$.
    In the hypergraph coloring game, we call our hypergraph $ H = (V, E)$, and we write $ E = \{e_1, \ldots, e_n\}$.
    We also consider a symmetric $(k, n \star r)$ chip game. In each column $i$, label the chips $ c_{i_1}, \ldots, c_{i_n}. $.
    Colorer fixes a winning strategy $\Sigma$ for Remover in the symmetric $(k, n \star r)$ chip game and proceeds as follows.

    On each turn, the Presenter presents a vertex $ v $ and hyperedges $ e_j $ for $ j \in J $ to which $v$ belongs.
    Colorer interpret's Presenter's move as a move in the symmetric chip game in which Pusher pushes the corresponding chips $ c_{j} $ in each column $i$ for each  $ j \in J$.
    If Remover's strategy $\Sigma$ removes the $ i $th column, then Colorer colors $ v $ with the color $ i. $ Since the Remover has a winning strategy, all of the chips will be removed before reaching the $ k $th row without being removed. 
    In other words, the chips with label $c_{j}$ are removed
    from each column $i \in \{1, \dots, r\}$
    after being pushed at most $k$ times.
    Correspondingly, each hyperedge $ e_j $ is colored with each color $i \in \{1, \dots, r\}$ after being presented at most $ k $ times. 
     Thus, the Colorer has a winning strategy.

    On the other hand, suppose that Pusher has a winning strategy in the symmetric $ (k, n \star r) $ chip game. We show that Presenter has a winning strategy in the hypergraph pancoloring game on a $k$-uniform hypergraph with $n$ edges and $r$ colors.
    Again, in each column,  label the chips $ c_{1}, \ldots, c_{n}. $ 
    Suppose that the hypergraph coloring game is played with a $ k $-uniform hypergraph $ H = (V, E) $ with $ E = \{c_{1}, \ldots, c_{n}\}. $ Presenter fixes a winning strategy of Pusher in the symmetric $(k,n \star r)$ and proceeds as follows.

    If the winning strategy for the Pusher pushes a set $ V = \{c_{j}: j \in J\} $ of chips, then the Presenter presents a vertex $ v $ belonging to the edge set $\{e_j: j \in J\}$. If the Colorer colors $ v $ with the color $ i $, then the Presenter 
    imagines that Remover removes chips in the $ i $th column. Since the Pusher has a winning strategy, there will be a chip in some column, say column $ i $, which reaches the $ k $th row.
    Correspondingly, some edge $e_i$
    is presented $ k $ times and none of its $ k $ vertices are colored $ i. $ Thus $e_j $ is not panchromatically colored, so the Presenter wins.
\end{proof}



Aslam and Dhagat used their chip game to show that $p_{OL}(n,2) \geq 2^{n-1}$, and an application of their same method
to the symmetric chip game with $r$ columns
shows that 
$p_{OL}(n,r) \geq \left ( \frac{r}{r-1} \right )^{n-1} $
for all $r \geq 2$.
For $r = 2$, this lower bound is within a constant factor of being best possible. Indeed, Duraj et al.~\cite{duraj2015chip} show that Pusher has a winning strategy in the $(k,8 \times 2^k, 8 \times 2^k)$ chip game. By making one copy of each chip for each column, the same strategy shows that Pusher wins the symmetric $(k,16 \times 2^k, 16 \times 2^k)$ chip game.
Therefore, Theorem \ref{thm:conn} implies that  $ p_{OL}(n,2) \leq 16 \times 2^n$.
 For $r \geq 3$, the following bound of Khuzieva et al.~is best known:
\begin{theorem}[\cite{khuzieva2017}]
    If $n > r$, then $p_{OL}(n,r)  \leq 3r(r-1)^2 n \left ( \frac{r}{r-1} \right )^{n+1}$.
\end{theorem}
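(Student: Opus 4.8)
The plan is to route everything through Theorem~\ref{thm:conn}: that theorem gives $p_{OL}(k,r) \le n'$ whenever Pusher wins the symmetric $(k, n' \star r)$ chip game, so it suffices to show that Pusher wins the symmetric $(n, M \star r)$ chip game with $M = 3r(r-1)^2 n \left(\frac{r}{r-1}\right)^{n+1}$ (in this instance the uniformity parameter equals $n$ and $M$ plays the role of the number of edges). Recall what a position looks like: there are $M$ labels, each starting as a row-$0$ chip in every one of the $r$ columns; a Pusher move selects a label set $T$, after which every surviving chip of every label in $T$ climbs one row, and Remover then picks a single column $C$ and deletes the column-$C$ chip of every label in $T$. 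The structural point is that Remover's one column choice strikes all pushed labels at once, so pushing many labels simultaneously helps Pusher only if, for each column, the valuable chips of the pushed labels are not concentrated there.

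For the strategy I would use a potential-function argument. Give a chip at row $\rho$ weight $\left(\frac{r}{r-1}\right)^{\rho}$ --- the base is forced, since of the $r$ synchronized copies pushed in a round only $r-1$ survive Remover's column --- and let $\Phi$ be the total weight on the board. If Pusher pushes a set $T$ of labels whose current total weight is $W$, then pushing adds exactly $\frac{1}{r-1}W$ to $\Phi$ (each surviving chip's weight is multiplied by $\frac{r}{r-1}$), whereas the weight Remover can destroy by removing column $C$, summed over all $r$ choices of $C$, equals $\frac{r}{r-1}W$; thus the per-column average of Remover's damage is exactly $\frac{1}{r-1}W$, matching the weight Pusher just added. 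Pusher's move is therefore chosen --- by a balancing/averaging argument over the $r$ columns, pushing only a carefully selected sub-collection of the eligible chips if necessary --- so that no single column exceeds this average by more than a bounded factor. Maintaining such a near-balance level by level while tracking the at most $nMr$ rounds the game can last keeps $\Phi$ above a positive threshold until a chip reaches row $n$; since the game is finite and ends either with a chip at row $n$ (Pusher wins) or with every chip removed (so $\Phi = 0$), Pusher must win. Extracting the quantitative bound, the exponential base $\frac{r}{r-1}$ is the per-row cost of the climb (of the $r$ copies pushed each round, one is always lost), the factor $n$ comes from controlling the loss at each of the $n$ rows separately, and the polynomial prefactor in $r$ absorbs the slack in the balancing step together with the weight of a fresh label; the bookkeeping yields $M = 3r(r-1)^2 n \left(\frac{r}{r-1}\right)^{n+1}$.

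The hard part is the balancing step: Pusher must always be able to choose a nonempty set of chips to push whose weight is distributed across the $r$ columns evenly enough that Remover's best single column choice costs only a bounded multiple of the average. This is exactly where the symmetric constraint bites, since all pushed labels are tied to the same removal, and it is also where the polynomial overhead $3r(r-1)^2 n$ is incurred: a cruder argument would let the per-round loss compound and ruin the exponential base. Designing how labels are grouped by configuration and in what order the groups are advanced, so that this balance can be kept with only that much slack, is the real content; once the weight function and the balancing lemma are fixed, the remaining estimates are routine. (This level-by-level accounting is precisely what the Duraj--Gutowski--Kozik--style refinement elsewhere in the paper improves on, by bounding the loss globally and thereby deleting the extra factor of $n$.)
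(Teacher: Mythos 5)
There is a genuine gap: your argument never actually proves the step on which everything hinges. Note first that this theorem is quoted from Khuzieva et al.~\cite{khuzieva2017} and is not proved in the paper; the known proof (and the paper's own improvement of it, Theorem~\ref{thm:brick}) is a \emph{brick-counting} strategy, in which chips at row $\rho$ are grouped into bricks of size $f(\rho)$ with $f(\rho)=f(\rho-1)+\lceil f(\rho-1)/(r-1)\rceil$, Pusher always advances the most-advanced full brick in each column, one counts bricks per row (at most three per row in \cite{khuzieva2017}, sharpened in Claim~\ref{claim:brick} here), and the prefactor $3r(r-1)^2 n$ falls out of that counting together with $f(n)\le r(\frac{r}{r-1})^n$ and the factor $r$ from passing between the symmetric and ordinary games. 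Your potential-function sketch is, morally, a continuous analogue of this, but you explicitly defer the ``balancing lemma'' and the bookkeeping that is supposed to yield $M=3r(r-1)^2 n(\frac{r}{r-1})^{n+1}$, and that deferred part is the entire content of the theorem.

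Moreover, the potential argument as framed cannot work without a substantially different invariant. With weights $(\frac{r}{r-1})^{\rho}$, pushing a set of total weight $W$ adds $\frac{W}{r-1}$ to $\Phi$, while the pushed weight summed over columns is $\frac{r}{r-1}W$; Remover removes the \emph{maximum} column, which is always at least the average $\frac{W}{r-1}$. So under best play $\Phi$ is nonincreasing no matter how well Pusher balances, with equality only under exact balance, which integrality (and the fact that a label's surviving chips all sit at the same row but in a shrinking set of columns) generally forbids. Hence ``keeping $\Phi$ above a positive threshold'' requires bounding the \emph{cumulative} imbalance loss over up to $nMr$ rounds against the initial potential $rM$, and you give no mechanism for doing so; this is exactly what the brick grouping achieves, by quantizing the weight so that each push of a full brick provably regenerates a full brick plus a $\frac{1}{r-1}$-fraction and the loss is absorbed into the ceiling. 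You also do not address why Pusher always has an eligible nonempty move under your strategy (the analogue of the endgame counting in the proof of Theorem~\ref{thm:brick}), nor where the factor $n$ (i.e.~$k+1$ rows) and the factor $r$ from Corollary~\ref{cor:pOL}-type duplication enter the constant. As it stands, the proposal identifies the right skeleton (reduce via Theorem~\ref{thm:conn}, climb at rate $\frac{r}{r-1}$ per row) but proves neither the strategy's feasibility nor the stated bound.
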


\begin{corollary}
\label{cor:pOL}
        If $ \chi_P(K_{n \star r}) \leq k, $ then $ p_{OL}(k, r) > n, $ and if $ \chi_P(K_{n \star r}) > k, $ then $ p_{OL}(k, r) \leq rn. $
\end{corollary}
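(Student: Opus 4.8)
The plan is to derive Corollary \ref{cor:pOL} by combining Theorem \ref{thm:chippaint} (which relates paintability of complete multipartite graphs to the ordinary chip game) with Theorem \ref{thm:conn} (which relates $p_{OL}(k,r)$ to the \emph{symmetric} chip game) and Theorem \ref{prop:ert}-style sandwiching ideas. The bridge between the two statements is a simple observation: a symmetric $(k, n \star r)$ chip game is a restriction of the ordinary $(k, n \star r)$ chip game in which Pusher's moves are constrained, so any Pusher strategy in the symmetric game is also a legal Pusher strategy in the ordinary game. Conversely, given an ordinary $(k, n' \star r)$ chip game won by Pusher, one can simulate it within a symmetric $(k, n \star r)$ chip game with enough chips by assigning a distinct label to each chip Pusher ever wishes to push independently.

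First I would prove the first implication: if $\chi_P(K_{n \star r}) \leq k$, then $p_{OL}(k,r) > n$. By Theorem \ref{thm:chippaint}, $\chi_P(K_{n \star r}) \leq k$ means Pusher has \emph{no} winning strategy in the ordinary $(k, n \star r)$ chip game, i.e.\ Remover wins. Since the symmetric $(k, n \star r)$ chip game only restricts Pusher further (a symmetric Pusher move is in particular an ordinary Pusher move), Remover's winning strategy in the ordinary game is a fortiori a winning strategy for Remover in the symmetric game. Hence Pusher has no winning strategy in the symmetric $(k, n \star r)$ chip game, and by Theorem \ref{thm:conn} this gives $p_{OL}(k,r) > n$.

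Next I would prove the second implication: if $\chi_P(K_{n \star r}) > k$, then $p_{OL}(k, r) \leq rn$. By Theorem \ref{thm:chippaint}, Pusher has a winning strategy $\Sigma$ in the ordinary $(k, n \star r)$ chip game. I want to convert this into a Pusher win in the \emph{symmetric} $(k, (rn) \star r)$ chip game, which by Theorem \ref{thm:conn} yields $p_{OL}(k,r) \leq rn$. The idea is that in the ordinary game each column has $n$ chips, for a total of $rn$ chips on the board, all a priori distinct; so we give each of the $rn$ chips on the board a distinct label from $\{1, \dots, rn\}$. In the symmetric $(k, (rn) \star r)$ chip game, each column has $rn$ labeled chips $c_{i,1}, \dots, c_{i,rn}$, so for each of the $rn$ labels $\ell$ there is a chip with that label in every column. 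Pusher plays as follows: he maintains an imagined instance of the ordinary $(k, n \star r)$ game running $\Sigma$, but relabels the imagined board so that the $j$th chip of column $i$ corresponds to label $(i-1)n + j$; when $\Sigma$ calls for pushing a certain subset of the $rn$ imagined chips, whose label set is $T \subseteq \{1, \dots, rn\}$, Pusher pushes exactly the chips labeled $T$ in the symmetric game (this is a legal symmetric move). When Remover removes column $i$ in the symmetric game, Pusher imagines Remover removing column $i$ in the ordinary game. One then checks by induction that the chips of column $i$ in the symmetric game at or below row $\Gamma=k$ dominate (row by row) the chips of column $i$ in the imagined ordinary game, so that when $\Sigma$ forces an imagined chip to row $k$, the correspondingly labeled chip in the symmetric game is also at row $k$. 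Thus Pusher wins the symmetric $(k, (rn) \star r)$ chip game.

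The main obstacle — really the only subtlety — is verifying the simulation invariant in the second part carefully, since in the symmetric game the chip with a given label in column $i$ must be pushed exactly when the corresponding imagined ordinary chip is pushed, and we must make sure Remover's column choices translate cleanly and that an imagined chip and its symmetric counterpart stay synchronized (in particular never getting removed in one game but not the other). Because every imagined chip maps to a \emph{distinct} label, there is no conflict in which labels Pusher needs to push, so the simulation goes through; one should also note that Pusher is allowed to push already-removed chips (as remarked after Definition \ref{def:move}), which is harmless and lets the symmetric simulation ignore bookkeeping about removed labels. I would also point out that the corollary follows just as directly from the inequalities $p(k,r) \leq N(k,r) \leq r p(k,r)$ in the online setting, mirroring Theorem \ref{prop:ert}, but the chip-game argument above is the most self-contained route given the tools already developed in the paper.
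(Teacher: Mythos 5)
Your proposal is correct and follows essentially the same route as the paper: the first implication is exactly the paper's observation that Remover's winning strategy in the ordinary $(k, n\star r)$ game remains winning in the symmetric game (which only restricts Pusher), combined with Theorem \ref{thm:conn}. Your labeled simulation for the second implication is just a detailed spelling-out of the paper's one-line construction of ``making one copy of each chip per column'' to turn an ordinary $(k,n\star r)$ Pusher win into a symmetric $(k, rn\star r)$ Pusher win, so the two arguments coincide.
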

\begin{proof}
    If $\chi_P(K_{n \star r}) \leq k$, then by Theorem \ref{thm:chippaint}, Remover has a winning strategy in the $(k,n\star r)$ chip game. In particular, Remover has a winning strategy in the symmetric $(k,n\star r)$ chip game, so by Theorem \ref{thm:conn}, $p_{OL}(k,r)>n$. 

    On the other hand, if $\chi_P(K_{n \star r}) > k$, then by Theorem \ref{thm:chippaint}, Pusher has a winning strategy in the $(k,n \star r)$ chip game. By making one copy of each chip per column, Pusher therefore has a winning strategy in the symmetric $(k,rn \star r)$ chip game, so by Theorem \ref{thm:conn}, $p_{OL}(k,r) \leq rn$.
\end{proof}

\subsection{An improved asymptotic bound for $p_{OL}(n,m)$}
We now use the relationship between chip games and panchromatic hypergraph coloring to obtain a lower bound for $\chi_{P}(K_{n\star m})$ which best known for large $n$ and fixed $m \geq 3$. Our result also implies an upper bound on $p_{OL}(n,m)$ for large $n$ and fixed $m \geq 3$ as a corollary, and
    this corollary
    improves a previous bound of Khuzieva et al.~\cite{khuzieva2017}.
In order to prove our lower bound, we use the chip game. Our method uses ideas from Khuzieva et al.~\cite{khuzieva2017} but makes several improvements.

We consider the chip game with $m \geq 2$ fixed columns.
We relabel the rows as $0,\dots,k$, this time with row $k$ as the initial row and with row $0$ as the target row. When a chip in row $i$ is pushed, it moves to row $i-1$.
We define a \emph{brick} to be a set of $f(r)$ chips 
in row $r$ of the same column, where $f$ is a function that we define below.
We define a brick at row $0$ to consist of a single chip, so that $f(0) = 1$.
For $0 \leq r\leq k$, we let $g(r)=\left \lceil \frac{f(r)}{m-1} \right \rceil $ for $r \geq 0$.
We call a set 
of $g(r)$ chips at row $r$ a $\frac{1}{m-1}$ fraction of a brick.

We define recursively the number of chips in a brick at row $r$ with the recurrence 
\[
f(r)=f(r-1)+g(r-1).
\]
We make the following observation about the function $f$.
\begin{claim}\label{claim:upperbound}
For $k \geq 0$,
   \[
   f(k)\leq \sum_{j=0}^{k}\left(\frac{m}{m-1}\right)^{j} = \frac{\left(\frac{m}{m-1}\right)^{k+1}-1}{\frac{m}{m-1}-1}
   <m\left(\frac{m}{m-1}\right)^{k}.
    \]
\end{claim}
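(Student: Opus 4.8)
The claim is an upper bound on $f(k)$, where $f$ is defined recursively by $f(0) = 1$ and $f(r) = f(r-1) + g(r-1)$ with $g(r-1) = \lceil f(r-1)/(m-1) \rceil$. The plan is to prove the bound $f(k) \leq \sum_{j=0}^{k} \left(\frac{m}{m-1}\right)^j$ by induction on $k$; the closed form and the final strict inequality then follow from the geometric series formula and from noting that the denominator $\frac{m}{m-1} - 1 = \frac{1}{m-1}$ gives $\sum_{j=0}^k \left(\frac{m}{m-1}\right)^j = (m-1)\left[\left(\frac{m}{m-1}\right)^{k+1} - 1\right] < (m-1) \cdot \frac{m}{m-1}\left(\frac{m}{m-1}\right)^k = m\left(\frac{m}{m-1}\right)^k$.

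**The induction.**

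The base case $k = 0$ is immediate: $f(0) = 1 = \left(\frac{m}{m-1}\right)^0$. For the inductive step, assume $f(k-1) \leq \sum_{j=0}^{k-1}\left(\frac{m}{m-1}\right)^j$. Then
\[
f(k) = f(k-1) + \left\lceil \frac{f(k-1)}{m-1} \right\rceil.
\]
The main technical wrinkle is the ceiling. I would handle it by first arguing that $f(k-1)$ is always a nonnegative integer (clear from the recursion, since $f(0) = 1$ and each step adds an integer), so $\lceil f(k-1)/(m-1)\rceil < f(k-1)/(m-1) + 1$, giving $f(k) < f(k-1) \cdot \frac{m}{m-1} + 1$. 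This additive $+1$ must be absorbed somewhere. The cleanest fix is to strengthen the inductive hypothesis slightly — or, better, to observe that $\lceil f(k-1)/(m-1)\rceil \le \lceil S_{k-1}/(m-1)\rceil$ where $S_{k-1} = \sum_{j=0}^{k-1}(\tfrac{m}{m-1})^j$, and that $S_{k-1}/(m-1) = S_k - S_{k-1} \cdot \tfrac{m}{m-1} \cdot \tfrac{?}{}$... Actually the identity I want is $\frac{1}{m-1} S_{k-1} = \sum_{j=1}^{k}\left(\frac{m}{m-1}\right)^j \cdot \frac{1}{m} \cdot \frac{m}{m-1}$, which is getting circular; instead I would directly use $\frac{1}{m-1}\left(\frac{m}{m-1}\right)^j = \left(\frac{m}{m-1}\right)^{j+1} - \left(\frac{m}{m-1}\right)^j$, so that $\frac{1}{m-1} S_{k-1} = \left(\frac{m}{m-1}\right)^k - 1$, an integer-free telescoping identity. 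Then $S_{k-1} + \frac{1}{m-1}S_{k-1} = \left(\frac{m}{m-1}\right)^k - 1 + S_{k-1} = S_k - 1$, so $f(k) \le S_{k-1} + \lceil \frac{1}{m-1} S_{k-1}\rceil \le \lceil S_{k-1} + \frac{1}{m-1}S_{k-1}\rceil = \lceil S_k - 1 \rceil = \lceil S_k \rceil$. Since we want $f(k) \le S_k$ exactly (not $\lceil S_k\rceil$), I need to be more careful: use that $f(k-1) \le S_{k-1}$ and $f(k-1)$ is an integer, so $f(k-1) \le \lfloor S_{k-1}\rfloor$, hence $\lceil f(k-1)/(m-1)\rceil \le \lceil \lfloor S_{k-1}\rfloor/(m-1)\rceil$, and then bound this by $\lceil (S_k - 1) - (S_{k-1} - \lfloor S_{k-1}\rfloor)/(m-1)\rceil \le \lceil S_k - 1\rceil \le S_k$ using that $f(k) = f(k-1) + \lceil\cdot\rceil$ is an integer at most a quantity that is at most $S_k$. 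The honest way: since $f(k)$ is an integer and $f(k) \le S_{k-1} + \frac{S_{k-1}}{m-1} + 1 = S_k$ requires checking the $+1$ is killed by the floor, I would instead just prove the bound $f(k) \le \lceil S_k \rceil$ if the exact form resists, but most likely the paper intends the clean telescoping and the floor of $S_{k-1}$ suffices.

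**Expected main obstacle.**

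The only real obstacle is bookkeeping around the ceiling function: showing that rounding $f(k-1)/(m-1)$ up does not push $f(k)$ past the geometric sum. I expect this is resolved by the telescoping identity $\frac{1}{m-1}\left(\frac{m}{m-1}\right)^j = \left(\frac{m}{m-1}\right)^{j+1} - \left(\frac{m}{m-1}\right)^j$ combined with the integrality of $f(k-1)$: since $f(k-1) \le S_{k-1}$ and $f(k-1) \in \mathbb{Z}$, we get $f(k-1) \le \lfloor S_{k-1} \rfloor$, and a short computation shows $\lfloor S_{k-1}\rfloor + \lceil \lfloor S_{k-1}\rfloor/(m-1)\rceil \le S_k$. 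Once the inductive inequality $f(k) \le S_k$ is established, the middle equality in the claim is the standard geometric series sum, and the final strict inequality $S_k < m\left(\frac{m}{m-1}\right)^k$ follows by dropping the $-1$ in the numerator and simplifying, as sketched above. I would present the telescoping identity first as a one-line lemma-free observation, then run the induction, then close with the two algebraic simplifications.
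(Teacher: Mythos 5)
Your overall plan (induction on $k$, the geometric-series closed form, and the final strict inequality by dropping the $-1$ in the numerator) matches the paper's, and you correctly identify the ceiling in $g(k-1)=\lceil f(k-1)/(m-1)\rceil$ as the only real issue. But your handling of that ceiling is where the proposal breaks down as written. The step $S_{k-1}+\bigl\lceil \tfrac{1}{m-1}S_{k-1}\bigr\rceil \le \bigl\lceil S_{k-1}+\tfrac{1}{m-1}S_{k-1}\bigr\rceil$ is false in general: for non-integer $a$ one can have $a+\lceil b\rceil > \lceil a+b\rceil$ (take $a=b=\tfrac12$), and $S_{k-1}$ is not an integer here. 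The subsequent equality $\lceil S_k-1\rceil=\lceil S_k\rceil$ is also wrong, since $\lceil S_k-1\rceil=\lceil S_k\rceil-1$. The later detour through $\lfloor S_{k-1}\rfloor$ and integrality of $f$ is never carried to a conclusion, and you end by hedging that you might only prove $f(k)\le\lceil S_k\rceil$. So the inductive step $f(k)\le S_k$ is not actually established by the proposal.

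The gap is easy to close, and you already computed the identity that does it: with $S_k=\sum_{j=0}^{k}\bigl(\tfrac{m}{m-1}\bigr)^j$, your telescoping observation gives $\tfrac{m}{m-1}S_{k-1}=S_k-1$. The paper simply bounds the ceiling by $\lceil x\rceil\le x+1$, so $f(k)=f(k-1)+\lceil f(k-1)/(m-1)\rceil\le \tfrac{m}{m-1}f(k-1)+1\le \tfrac{m}{m-1}S_{k-1}+1=S_k$; the additive $+1$ is exactly the missing $j=0$ term when the reindexed sum $\sum_{j=0}^{k-1}\bigl(\tfrac{m}{m-1}\bigr)^{j+1}=\sum_{j=1}^{k}\bigl(\tfrac{m}{m-1}\bigr)^{j}$ is completed to $S_k$. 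No integrality of $f$, floors, or ceilings of $S_{k-1}$ are needed. With this one-line fix, the rest of your argument (base case, geometric-series evaluation, and $S_k<m\bigl(\tfrac{m}{m-1}\bigr)^k$) is fine and coincides with the paper's proof.
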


\begin{proof}
    The proof goes by induction. The base case $k=0$ is clearly true. The induction step is as follows. For $k \geq 1$, 
\begin{eqnarray*}
    f(k)&=&f(k-1)+g(k-1) 
    = f(k-1)+\left \lceil \frac{f(k-1)}{m-1} \right \rceil \\
    &\leq& f(k-1)+\frac{f(k-1)}{m-1}+1=\frac{m}{m-1}f(k-1)+1\\
    &\leq& \sum_{j=0}^{k-1}\left(\frac{m}{m-1}\right)^{j+1}+1=\sum_{j=0}^{k}\left(\frac{m}{m-1}\right)^{j}.
\end{eqnarray*}
\end{proof}

Now, we prove a lower bound on the paintability of the complete multipartite graph $K_{\left(m(k+1)f(k)\right)\star m}$, which ultimately implies an upper bound on $p_{OL}(n,m)$.
\begin{theorem}
\label{thm:brick}
For all integers $m,k \geq 2$
\[
\chi_{P}(K_{\left(m(k+1)f(k)\right)\star m}) >  k.
\]
\end{theorem}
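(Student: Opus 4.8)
The plan is to give Pusher an explicit strategy in the chip game with $m$ columns and rows relabelled $k, k-1, \dots, 0$ (with $0$ the target), starting from the position in which each column holds $m(k+1)f(k)$ chips in row $k$. The key idea, following Khuzieva et al.\ but with a cleaner accounting, is to maintain the invariant that at each stage Pusher has enough chips in a single column to form a \emph{brick} at some row $r$, i.e.\ $f(r)$ chips sitting in row $r$ of one column, and then to show by downward induction on $r$ that owning a brick at row $r$ in some column lets Pusher force a chip to row $0$. The base case is $r = 0$: a brick at row $0$ is a single chip already at the target, so Pusher has won.

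For the inductive step, suppose Pusher controls a brick $S$ of $f(r)$ chips in row $r$ of column $c$. Pusher pushes all of $S$ up to row $r-1$. Remover now removes the pushed chips in exactly one column; if Remover removes column $c$, Pusher loses this brick, but the point of the construction is that Pusher should simultaneously be threatening in several columns so that Remover cannot neutralize all threats. Concretely, I would have Pusher operate on a $\frac{1}{m-1}$ fraction of a brick, $g(r-1) = \lceil f(r-1)/(m-1)\rceil$ chips, in each of the $m-1$ columns other than the "main" column, together with the main brick: Pusher pushes the main brick of size $f(r-1)+g(r-1) = f(r)$ from row $r$ to row $r-1$. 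Whichever column Remover chooses, in the surviving columns Pusher has used up its fractional bricks but the main column (if it survived) now has $f(r)$ chips in row $r-1$, hence contains a sub-brick of size $f(r-1)$ — reducing to the case $r-1$ — while if Remover killed the main column, then the $m-1$ fractions, of total size $(m-1)g(r-1) \ge f(r-1)$, can be pushed together to reconstitute a brick of size $f(r-1)$ in one of the other columns. This is exactly where the recurrence $f(r) = f(r-1) + g(r-1)$ and the factor $m-1$ are forced, and it is the delicate bookkeeping step: one must choose which columns play the role of "main" and "fractional" consistently over the $k$ levels of recursion, and check that the chips consumed along the way never exceed the supply.

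To close the argument one counts the total consumption. Descending from row $k$ to row $0$ takes $k$ (or $k+1$) levels; at each level the strategy spends at most one brick of size at most $f(k)$ per column, and there are $m$ columns, so the $m(k+1)f(k)$ chips placed in row $k$ of each column suffice to keep the invariant alive through every level regardless of Remover's choices — here Claim~\ref{claim:upperbound} is not even needed for correctness, only later to convert the bound into the asymptotic statement for $p_{OL}$. Since Pusher forces a chip to row $0$, Pusher wins the $(k, (m(k+1)f(k))\star m)$ chip game, and by Theorem~\ref{thm:chippaint} this gives $\chi_P(K_{(m(k+1)f(k))\star m}) > k$.

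\textbf{Main obstacle.} The crux is the recursive "brick vs.\ fractional-brick" scheduling: making precise how Pusher partitions its chips into a main brick plus $(m-1)$ fractions at every row, proving that after Remover deletes one column Pusher can always rebuild a full brick one row lower (either from the surviving main column or by merging the $m-1$ fractions), and verifying that the per-column budget $(k+1)f(k)$ is never overrun across all $k$ recursion levels and all of Remover's possible responses. Everything else — the base case, the final count, and the invocation of Theorem~\ref{thm:chippaint} — is routine once that inductive step is set up correctly.
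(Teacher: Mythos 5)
There is a genuine gap, and it sits exactly where you flag your ``main obstacle.'' Your inductive step hinges on the claim that if Remover deletes the main column, the $m-1$ fractional pieces of size $g(r-1)$ sitting in the \emph{other} $m-1$ columns ``can be pushed together to reconstitute a brick of size $f(r-1)$ in one of the other columns.'' That is impossible in this game: chips never change columns, and a brick is by definition a set of $f(r)$ chips in a \emph{single} column. Each surviving fractional piece has only $g(r-1)=\lceil f(r-1)/(m-1)\rceil$ chips, which for $m\geq 3$ is far short of a brick, and their union is spread over $m-1$ distinct columns, so no column gains a full brick from them. Without a repaired mechanism, Remover can simply always delete whichever column carries your current main brick, and the single-brick induction ``a brick at row $r$ forces a win'' never gets off the ground. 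Your budget count is also not aligned with the actual requirement: ``one brick per level per column'' would suggest roughly $k$ bricks per column, whereas the statement needs $m(k+1)$ bricks per column, and that factor of $m$ is precisely what a correct argument must account for.

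The paper's proof resolves this differently. Pusher does not nurse one main brick; on every turn Pusher pushes, in \emph{every} column, the most advanced full brick in that column. Each pushed brick of size $f(r)$ reappears at row $r-1$ as a full brick plus a $\tfrac{1}{m-1}$ fraction, and fractions merge into a new full brick only when $m-1$ of them accumulate at the same row of the same column (which happens as successive bricks of that column pass through that row). The punchline is a global potential argument rather than a per-brick induction: after each round the number of bricks does not decrease (Remover kills one of the $m$ pushed bricks, and the surviving $m-1$ each become $\tfrac{m}{m-1}$ bricks), while Claim~\ref{claim:brick} shows each column can store only $O(k)$ bricks strictly above the starting row. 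Hence if Pusher were ever unable to find a brick to push in some column, the total number of bricks would be below the initial count $m\cdot m(k+1)=m^2(k+1)$, a contradiction; so Pusher always has a legal move in a finite game and therefore wins, and Theorem~\ref{thm:chippaint} converts this into the paintability bound. To salvage your write-up you would essentially have to replace the cross-column merging step by this ``push in all columns, count bricks globally'' argument (or some other device that keeps all merging within a single column), together with a structural bound like Claim~\ref{claim:brick} to justify that the supply of $m(k+1)$ bricks per column is what is actually needed.
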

\begin{proof}
    By Theorem \ref{thm:chippaint}, it is sufficient to show that Pusher wins the chip game with $m$ columns, in which each column starts with $m(k+1)f(k)$ tokens. We show that Pusher wins by using the following strategy.
    On each turn, 
    for each column $C$, Pusher selects the brick $B$ in $C$ whose row $r$ is minimum (that is, $B$ is farthest from the starting row) and pushes all chips in $B$.
    When Pusher pushes $B$, the pushed chips form a full brick plus a $\frac{1}{m-1}$ of a brick in row $r-1$, since
    $f(k)=f(k-1)+g(k-1)$. 

    If at any point there exist $m-1$ copies of a $\frac{1}{m-1}$-brick occupying the same space, we merge them back into one full brick. This action is possible, since $(m-1)\lceil \frac{f(k)}{m-1} \rceil \geq f(k)$. Any extra chips produced in the process as a result of the inequality are ignored, which changes neither the number of bricks nor the integrity of the proof.

    We claim that after a Pusher and Remover turn, the number of bricks does not decrease.
    Indeed, on Pusher's turn, Pusher advances exactly $m$ bricks, and exactly one of these bricks is deleted. After Remover's turn is over, as the chips of the surviving $m-1$ bricks have advanced one row, these surviving $m-1$ bricks become $(m-1) \cdot \frac{m}{m-1} = m$ bricks.
    We make the following claim about the positions of the bricks throughout the game.

    \begin{claim}
    \label{claim:brick}
        At the end of each round, in each column $C$,
        the highest row $r_C$ occupied by a full brick has at most $2$ bricks, unless $r_C = k$. Furthermore, each row strictly between row $k$ and $r_C$ has at most $1$ brick.
    \end{claim}
    \begin{proof}[Proof of claim]
    We induct on the number of rounds that have already been played. When no round has been played, the claim clearly holds.


    Now, for our induction case, assume that at the end of a round, we have a state in which the claim holds. Then, 
    suppose that in column $C$, Pusher moves a brick from row $i$ to row $i-1$. If this brick is removed on Remover's turn, then the claim clearly holds for column $C$. Otherwise, we know that before Pusher's turn, there must not be any bricks in column $C$ above row $i$, 
    as otherwise Pusher would not choose to push the brick in row $i$.
    Therefore,  before Pusher's turn,
    row $i-1$ has at most $\frac{m-2}{m-1}$ bricks. Since the brick pushed from row $i$ splits into a full brick and a $\frac{1}{m-1}$ of a brick on row $i-1$, 
    row $i-1$ has at most $2$ bricks after Pusher's turn, and no row above $i-1$ has a full brick.
     In order to ensure that every other row between row $k$ and row $i-1$ has at most $1$ brick, we only need to consider row $i$, since each row below it is left untouched. Since row $i$ starts with at most $2$ bricks, and $1$ brick is moved, this means row $i$ ends with at most $1$ brick after Pusher's turn. Thus, the claim holds in every case.
    \end{proof}

    If Pusher is unable to follow the strategy outlined above, then this means Pusher is unable to find a brick to push some column $C$. Thus, assume that one column $C$ runs out of full bricks and that Pusher has not won.
     Since Pusher pushes a brick from row $k$ of each column on the first move, row $k$ of each column other than $C$ has at most $m(k+1)-1$ bricks, and row $k$ of column $C$ has at most $\frac{m-2}{m-1}$ bricks.
    By Claim \ref{claim:brick}, above row $k$, each column other than $C$ has at most $k$ bricks, and $C$ has at most $\frac{m-2}{m-1}(k-1)$ bricks above row $k$.
    Therefore, the total number of bricks on the board is at most 
    \[(m-1)(m(k+1)-1) + (m-1)k + k\left (\frac{m-2}{m-1}  \right ) < (m-1)(m(k+1) - 1) + mk \leq m^2 (k+1), \]
    contradicting the observation that the number of bricks on the board after each round is at least $m^2(k+1)$.
    Therefore, while Pusher has not yet won, Pusher has a legal move following the strategy described above, which implies that the game ends with Pusher winning.
\end{proof}

    In the proof of Theorem \ref{thm:brick}, Claim \ref{claim:brick} gives us an upper bound on the maximum number of bricks that can be on the board at any given point. 
    We note that Khuzieva et al.~\cite{khuzieva2017}
    have a similar claim, but they only show that each row other than row $k$ has at most three bricks. 
   Our improved Claim \ref{claim:brick} ultimately gives an improved result.

We also get a bound on the online panchromatic number, which improves Proposition 3 of \cite{khuzieva2017} by a factor of roughly $3$ when $m \geq 3$ and $k$ is large.

\begin{corollary}
For $m \geq 2$,
    \[ p_{OL}(k, m) \leq m^{3}(k+1)
    \left(\frac{m}{m-1}\right)^k. \]
\end{corollary}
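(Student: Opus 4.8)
The plan is to derive the corollary directly from Theorem~\ref{thm:brick} together with Corollary~\ref{cor:pOL} and the estimate in Claim~\ref{claim:upperbound}. First I would set $n = m(k+1)f(k)$, so that Theorem~\ref{thm:brick} gives $\chi_P(K_{n \star m}) > k$. Applying the second implication of Corollary~\ref{cor:pOL} with this $n$ and $r = m$ then yields $p_{OL}(k,m) \leq m \cdot n = m^2(k+1)f(k)$.

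Next I would bound $f(k)$ using Claim~\ref{claim:upperbound}, which gives $f(k) < m\left(\frac{m}{m-1}\right)^k$. Substituting this into the bound $p_{OL}(k,m) \leq m^2(k+1)f(k)$ produces
\[
p_{OL}(k,m) \leq m^2(k+1) \cdot m\left(\frac{m}{m-1}\right)^k = m^3(k+1)\left(\frac{m}{m-1}\right)^k,
\]
which is exactly the claimed inequality. The only mild care needed is that Claim~\ref{claim:upperbound} gives a strict inequality $f(k) < m(m/(m-1))^k$ while $f(k)$ is an integer; since we are chaining into a ``$\leq$'' statement this is harmless, and one could equally invoke the exact geometric-sum form of Claim~\ref{claim:upperbound} if a non-strict bound is preferred.

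There is essentially no obstacle here: the corollary is a straightforward composition of already-established results, and all the real work was done in proving Theorem~\ref{thm:brick} (in particular the sharpened Claim~\ref{claim:brick}, which improves the per-row brick count from $3$ to $2$ and is the source of the factor-of-$3$ improvement over Khuzieva et al.). I would also briefly remark, as the text preceding the corollary does, that the improvement factor compared to Proposition~3 of~\cite{khuzieva2017} comes from replacing their coefficient $3r(r-1)^2 \cdot \left(\frac{r}{r-1}\right)$ with $m^3$ in the leading constant, which is an improvement by a factor tending to $3$ as $m \to \infty$ for fixed large $k$, and that the hypothesis $m \geq 2$ matches the hypothesis of Theorem~\ref{thm:brick}.
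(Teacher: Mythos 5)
Your proposal is correct and follows essentially the same route as the paper: combine Theorem~\ref{thm:brick} with Corollary~\ref{cor:pOL} and the bound $f(k) < m\left(\frac{m}{m-1}\right)^k$ from Claim~\ref{claim:upperbound}. The only (harmless) difference is the order of the last two steps — you apply Corollary~\ref{cor:pOL} before bounding $f(k)$, which if anything is slightly cleaner since it avoids invoking the theorem at a possibly non-integer part size.
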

\begin{proof}
    By Theorem \ref{thm:brick}, 
    $\chi_P(K_{n \star r}) > k$ 
    when $n = m(k+1)f(k)$. As $f(k) < m\left (\frac{m}{m-1} \right )^k$ by Claim \ref{claim:upperbound},
     $\chi_P(K_{n \star r}) > k$  when $n = (k+1) m^2 \left (\frac{m}{m-1} \right )^k$.
    Then, by Corollary \ref{cor:pOL}, 
    $p_{OL}(k, m) \leq m^{3}(k+1)
    \left(\frac{m}{m-1}\right)^k.$
\end{proof}

\section{Conclusion}
\label{sec:conclusion}
Finding the exact paintability of graph is a difficult problem, because it involves a game between two players. To determine the exact paintability of a graph, we not only need to find an optimal strategy for the Painter, and also find a optimal strategy for the Lister. For this reason, only the paintabilities of some graphs with simple structures, such as certain complete multipartite graphs, are known.
Can we determine the paintability of other complete multipartite graphs with small parts and other structures? How large is the gap between choosability and paintability for general graphs? These are questions to be solved.

 We were able to obtain exact values for  $\chi_P(K_{3\star 4})$, $\chi_P(K_{3\star5})$, and $\chi_P(K_{3\star 6})$, but we are still interested in finding the exact value of $\chi_P(K_{3\star7})$.

\begin{question}
What is $\chi_P(K_{3 \star 7})$?
\end{question}


Kozik, Micek, and Zhu~\cite{KMZ} asked for the size of the gap $\chi_P(K_{3\star n})-ch(K_{3\star n})$, and we have shown that for $n \leq 6$, this gap is zero. However, the following questions remain open.

\begin{question}
   Is it true that $\ch(K_{3 \star n}) = \chi_P(K_{3 \star n})$ for each positive integer $n$?
\end{question}
Another natural problem is whether the method of Duraj \cite{duraj2015chip} for the two-column chip game can be extended to games with more columns. We pose the following specific question:
\begin{question}
\label{q:r-1r}
    For fixed $r \geq 3$, is $\chi_P(K_{r \star n}) = \log_{\left (\frac{r}{r-1} \right )} n + O(1)$?
\end{question}

Question \ref{q:r-1r} is related to the following question about panchromatic hypergraph coloring.

 \begin{question}
    For each $r \geq 2$, what is the asymptotic growth rate of $\frac{p(n,r)}{p_{OL}(n,r)}$ as $n$ increases?
\end{question}

Finally, we developed computational methods that determine the paintabilities specifically of complete multipartite graphs. This leads to the following natural question.
\begin{question}
For which other graph classes can paintabilities be computationally determined?
\end{question}

\bibliographystyle{plain}
\bibliography{citation} 

\begin{thebibliography}{10}

\bibitem{Akh}
Margarita Akhmejanova, Ilya Bogdanov, and Grigory Chelnokov.
\newblock The continualization approach to the on-line hypergraph coloring,
  2022.

\bibitem{alon1993restricted}
Noga Alon.
\newblock Restricted colorings of graphs.
\newblock {\em Surveys in combinatorics}, 187:1--33, 1993.

\bibitem{AslamDhagat}
Javed~A. Aslam and Aditi Dhagat.
\newblock On-line algorithms for {$2$}-coloring hypergraphs via chip games.
\newblock {\em Theoret. Comput. Sci.}, 112(2):355--369, 1993.

\bibitem{ASLAM1993355}
Javed~A. Aslam and Aditi Dhagat.
\newblock On-line algorithms for 2-coloring hypergraphs via chip games.
\newblock {\em Theoretical Computer Science}, 112(2):355--369, 1993.

\bibitem{CHERKASHIN2018652}
Danila Cherkashin.
\newblock A note on panchromatic colorings.
\newblock {\em Discrete Mathematics}, 341(3):652--657, 2018.

\bibitem{duraj2015chip}
Lech Duraj, Grzegorz Gutowski, and Jakub Kozik.
\newblock Chip games and paintability.
\newblock {\em arXiv preprint arXiv:1506.01148}, 2015.

\bibitem{erdos1979choosability}
Paul Erdos, Arthur~L Rubin, and Herbert Taylor.
\newblock Choosability in graphs.
\newblock {\em Congr. Numer}, 26(4):125--157, 1979.

\bibitem{Erdős1964445}
P.~Erdős.
\newblock On a combinatorial problem. ii.
\newblock {\em Acta Mathematica Academiae Scientiarum Hungaricae}, 15(3-4):445
  – 447, 1964.
\newblock Cited by: 110.

\bibitem{HK}
John~E. Hopcroft and Richard~M. Karp.
\newblock An {$n\sp{5/2}$} algorithm for maximum matchings in bipartite graphs.
\newblock {\em SIAM J. Comput.}, 2:225--231, 1973.

\bibitem{huang2012application}
Po-Yi Huang, Tsai-Lien Wong, and Xuding Zhu.
\newblock Application of polynomial method to on-line list colouring of graphs.
\newblock {\em European Journal of Combinatorics}, 33(5):872--883, 2012.

\bibitem{khuzieva2017}
Alina Khuzieva, Dmitry Shabanov, and Polina Svyatokum.
\newblock On-line and list on-line colorings of graphs and hypergraphs.
\newblock {\em Moscow Journal of Combinatorics and Number Theory}, 7:39--57,
  2017.

\bibitem{kierstead2000choosability}
Henry~A Kierstead.
\newblock On the choosability of complete multipartite graphs with part size
  three.
\newblock {\em Discrete Mathematics}, 211(1-3):255--259, 2000.

\bibitem{KKLZ}
Seog-Jin Kim, Young~Soo Kwon, Daphne Der-Fen Liu, and Xuding Zhu.
\newblock On-line list colouring of complete multipartite graphs.
\newblock {\em Electron. J. Combin.}, 19(1):Paper 41, 13, 2012.

\bibitem{KnuthMoore}
Donald~E. Knuth and Ronald~W. Moore.
\newblock An analysis of alpha-beta pruning.
\newblock {\em Artificial Intelligence}, 6(4):293--326, 1975.

\bibitem{Kostochka20021}
Alexandr Kostochka.
\newblock On a theorem of {Erdos, Rubin, and Taylor} on choosability of
  complete bipartite graphs.
\newblock {\em Electronic Journal of Combinatorics}, 9(1 N):1--4, 2002.

\bibitem{KMZ}
Jakub Kozik, Piotr Micek, and Xuding Zhu.
\newblock Towards an on-line version of {O}hba's conjecture.
\newblock {\em European J. Combin.}, 36:110--121, 2014.

\bibitem{Ohba}
Kyoji Ohba.
\newblock On chromatic-choosable graphs.
\newblock {\em J. Graph Theory}, 40(2):130--135, 2002.

\bibitem{radhakrishnan}
Jaikumar Radhakrishnan and Aravind Srinivasan.
\newblock Improved bounds and algorithms for hypergraph {$2$}-coloring.
\newblock {\em Random Structures Algorithms}, 16(1):4--32, 2000.

\bibitem{Schauz}
Uwe Schauz.
\newblock Mr. {P}aint and {M}rs. {C}orrect.
\newblock {\em Electron. J. Combin.}, 16(1):Research Paper 77, 18, 2009.

\bibitem{Vizing}
V.~G. Vizing.
\newblock Coloring the vertices of a graph in prescribed colors.
\newblock {\em Diskret. Analiz}, (29):3--10, 101, 1976.

\bibitem{zhu2009line}
Xuding Zhu.
\newblock On-line list colouring of graphs.
\newblock {\em The Electronic Journal of Combinatorics}, 16(1):R127, 2009.

\end{thebibliography}
\end{document}